\documentclass[reqno]{amsart}
\usepackage{amssymb,amsmath,amsthm}
\allowdisplaybreaks
\theoremstyle{plain}
\newtheorem{lemma}{Lemma}
\newtheorem{theorem}{Theorem}
\newtheorem{proposition}{Proposition}
\newtheorem{corollary}{Corollary}
\newtheorem{conjecture}{Conjecture}
\newtheorem{problem}{Problem}

\theoremstyle{definition}
\newtheorem{definition}{Definition}

\theoremstyle{remark}
\newtheorem{remark}{Remark}

\numberwithin{equation}{section}

\newcommand{\qpoch}[2]{(#1;#2)_\infty}
\newcommand{\qpochn}[3]{(#1;#2)_{#3}}
\newcommand{\trinum}[1]{t_{#1}}
\newcommand{\fr}{\frac}
\newmuskip\pFqskip
\mathchardef\pFcomma=\mathcode`, 
\newcommand*\pFq[5]{%
  \begingroup
  \begingroup\lccode`~=`,
  \lowercase{\endgroup\def~}{\pFcomma\mkern\pFqskip}%
 \mathcode`,=\string"8000
 {}_{#1}\phi_{#2}\biggl[\genfrac..{0pt}{}{#3}{#4};#5\biggr]%
 \endgroup
 }
 \newmuskip\pGqskip
\mathchardef\pGcomma=\mathcode`, 

\title[Two-color series]{On a two-color partition series and its companions}

\author[G. E. Andrews and M. El Bachraoui]{George E. Andrews and Mohamed El Bachraoui}
\address{The Pennsylvania State University, University Park, Pennsylvania 16802}
\email{andrews@math.psu.edu}
\address{Department of Mathematical Sciences, United Arab Emirates University, PO Box 15551, Al-Ain, UAE}
\email{melbachraoui@uaeu.ac.ae}

\keywords{integer partitions, two-color partitions, concave compositions, false theta functions, $q$-series, arithmetic progressions}
\subjclass[2020]{11P81, 11P83, 05A17, 33D15}

\begin{document}

\begin{abstract}
We study the two-color distinct-part series \(S_1(q)\), equivalently Andrews' generating function \(v_d(q)\) for strictly concave compositions, and its odd and even companions \(T_o(q)\) and \(T_e(q)\). We determine the coefficients of \(S_1(q)\) modulo \(4\) and obtain a complete criterion for the resulting Ramanujan-type progressions. For the even companion, we give a direct overpartition interpretation of its coefficients and show that two natural partition families are each counted by half of those coefficients. For the eta-normalized odd companion \(C(q)=(q;q)_\infty T_o(q)\), we prove a quintic self-similarity, derive exact vanishing relations and infinite sign changes for its coefficients, and show that \(c(n)\) can be nonzero only when \(24n+28\) is represented by \(x^2+3y^2\).
\end{abstract}

\date{\textit{\today}}
\thanks{First author partially supported by Simons Foundation Grant 633284}
\maketitle

\section{Introduction and main results}\label{sec:intro}

Throughout we use the standard $q$-shifted factorial notation; see, for example~\cite[Chapter~1]{GasperRahman}. Thus
\[
(a;q)_0=1,\qquad
\qpochn{a}{q}{n}=\prod_{j=0}^{n-1}(1-aq^j),\qquad
\qpoch{a}{q}=\prod_{j=0}^{\infty}(1-aq^j),
\]
and
\[
(a_1,\dots,a_r;q)_n=(a_1;q)_n\cdots(a_r;q)_n,
\qquad
(a_1,\dots,a_r;q)_\infty=(a_1;q)_\infty\cdots(a_r;q)_\infty.
\]
Integer partitions in which each part may occur in two colors (red and blue) have been studied extensively in recent years. For references on two-color partitions closely related to the present work, see, for instance,~\cite{Andrews1987, Andrews2021, AndrewsElBachraouiPositive, AndrewsElBachraouiCongruences, AndrewsKumar, BanerjeeBringmannKeith, ChernWang}.

In this paper, the main series is
\[
S_1(q):=\sum_{n\ge 0} q^n\,\qpoch{-q^{n+1}}{q}^{2}.
\]
Thus $S_1(q)$ is the generating function for the number of two-color partitions into distinct
parts whose smallest part occurs in one prescribed color only, while every larger part may occur in either color or in both colors.
The constant term counts the empty partition.
The same series also appears in Andrews' work on concave and convex compositions~\cite{AndrewsConcaveConvex}.
Indeed, if $V_d(n)$ denotes the number of strictly concave compositions of $n$ in the notation of~\cite{AndrewsConcaveConvex}, and
\[
v_d(q):=\sum_{n\ge 0}V_d(n)q^n,
\]
then
\[
v_d(q)=S_1(q).
\]
Here the central part contributes $q^n$, and the two strict sides independently select distinct parts greater than $n$.
Thus the congruence results for $s_1(n)$ below may also be read as congruence results for strictly concave compositions.
We record a simple companion identity.
\begin{remark}\label{rem:both-smallest}
If both colors are required to occur in the smallest part, then the natural generating function is
\[
S_2(q):=\sum_{n\ge 1} q^{2n}\,\qpoch{-q^{n+1}}{q}^{2}.
\]
A short calculation gives
\[
S_2(q)=3(-q;q)_\infty^2-1-2S_1(q),
\]
so this variant produces no essentially new series.
\end{remark}
Writing
\[
S_1(q) = \sum_{n\geq 0} s_1(n) q^n,
\]
our first main result is the following complete description modulo $4$ for the sequence $\{s_1(n)\}$.
\begin{theorem}\label{thm:mod4}
For every $n\ge 0$,
\[
s_1(n)\equiv
\begin{cases}
(-1)^r \pmod{4}, & \text{if } n=\dfrac{r(r+1)}{2} \text{ for some } r\ge 0, \\
0 \pmod{4}, & \text{otherwise.}
\end{cases}
\]
In particular, $s_1(n)$ is odd if and only if $n$ is triangular.
\end{theorem}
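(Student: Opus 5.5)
The plan is to reduce modulo $4$ to a series with alternating signs, and then evaluate that series in closed form as a false theta function. The reduction rests on the factorwise congruence
\[
(1+x)^2=(1-x)^2+4x\equiv (1-x)^2 \pmod 4 ,
\]
valid for any power series $x$ with integer coefficients. Applying it to each factor of $\qpoch{-q^{n+1}}{q}^2$ gives $\qpoch{-q^{n+1}}{q}^2\equiv \qpoch{q^{n+1}}{q}^2 \pmod 4$ coefficientwise. Indeed, a product of factors each congruent to the corresponding factor of another product is congruent to that product. Only finitely many factors affect any fixed coefficient, so there is no convergence issue. Hence
\[
S_1(q)\equiv \sum_{n\ge 0} q^n \qpoch{q^{n+1}}{q}^2
=\qpoch{q}{q}^2\sum_{n\ge 0}\frac{q^n}{\qpochn{q}{q}{n}^2}\pmod 4 .
\]
It therefore suffices to prove the exact identity
\[
\sum_{n\ge 0}\frac{q^n}{\qpochn{q}{q}{n}^2}=\frac{1}{\qpoch{q}{q}^2}\sum_{r\ge 0}(-1)^r q^{r(r+1)/2},
\qquad\text{i.e.}\qquad
\sum_{n\ge0} q^n\qpoch{q^{n+1}}{q}^2=\sum_{r\ge0}(-1)^rq^{r(r+1)/2}.
\]
I have checked this through $q^2$: both sides of the first form begin $1+q+3q^2$. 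Granting it, the coefficient of $q^n$ in $S_1(q)$ is $\equiv(-1)^r \pmod 4$ when $n=r(r+1)/2$, and $\equiv 0$ otherwise, which is the theorem. The parity statement follows immediately.

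To prove the false theta identity, I would introduce a parameter and set
\[
F(z):=\sum_{n\ge0} z^n q^n\,\qpoch{q^{n+1}}{q}\,\qpoch{zq^{n+1}}{q}.
\]
The idea is to derive a $q$-functional equation by splitting off the $n=0$ term and shifting $n\mapsto n+1$. Equivalently, one can use Heine's transformation on
\[
{}_2\phi_1(0,0;q;q)=\sum_n q^n/\qpochn{q}{q}{n}^2,
\]
writing it as a limiting case of ${}_2\phi_1(a,b;q;z)$ with $a,b\to0$ taken in an order that keeps every term finite. The target is a Rogers--Fine type expansion $\sum_r (-1)^r z^r q^{r(r+1)/2}$. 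The cleanest route I would try first is Fine's identity in the form
\[
\sum_{n\ge0}\frac{(aq;q)_n\,t^n}{(bq;q)_n}
=\sum_{n\ge0}\frac{(aq;q)_n\,(atq/b;q)_n\,b^nt^nq^{n^2}\,(1-atq^{2n+1})}{(bq;q)_n\,(t;q)_{n+1}}.
\]
Here the parameters must be chosen so that the left side becomes $\sum q^n\qpoch{q^{n+1}}{q}^2/\qpoch{q}{q}^2$ after multiplying through by a product. Alternatively, I would expand one factor by Euler,
\[
\qpoch{q^{n+1}}{q}=\sum_{k\ge0}\frac{(-1)^kq^{k(k+1)/2+nk}}{\qpochn{q}{q}{k}},
\]
swap the sums, and evaluate the inner sum $\sum_n q^{n(k+1)}\qpoch{q^{n+1}}{q}$ by the $q$-binomial theorem.

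The main obstacle is this exact evaluation. The mod-$4$ reduction is formal, but obtaining the false theta function requires choosing the right transformation and limits. If the direct routes become messy, the fallback is a functional-equation argument in $z$. I would show that both $F(z)$ and $G(z):=\sum_r(-1)^rz^rq^{r(r+1)/2}$ satisfy the same first-order $q$-difference equation in $z$ with the same value at $z=0$, and then conclude $F(1)=G(1)$.
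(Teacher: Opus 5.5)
Your plan is correct once $H(q)=\Theta(q)$ is evaluated by the Euler-plus-$q$-binomial route you list. Its reduction step differs from the paper's.

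\textbf{The reduction.} The paper does not reduce factor by factor. It expands both $S_1(q)$ and $H(q)=\sum_{n\ge0}q^n\qpoch{q^{n+1}}{q}^2$ by the $q$-binomial theorem into sums over $k$ that differ termwise only by a factor $(-1)^k$. The even-$k$ terms cancel, giving the exact identity $S_1(q)-H(q)=4qT_o(q)$. The congruence then needs only that $T_o(q)$ has integer coefficients.

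Your identity $(1+x)^2=(1-x)^2+4x$ gives $S_1\equiv H\pmod 4$ in one line. What the paper's longer route buys is the identification of $(S_1-H)/4$ as $qT_o(q)$, i.e.\ \eqref{S1-id2}. That identity is reused for \eqref{cor:companions}, for the formula \eqref{C-basic} underlying all the results on $C(q)$, and for $4t_o(n)=s_1(n+1)-\varepsilon(n+1)$. Your route yields Theorem~\ref{thm:mod4} alone. Both routes then rest on $H(q)=\Theta(q)$, which is Lemma~\ref{lem:H-theta}.

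\textbf{The evaluation of $H$.} This is not actually an obstacle. Your Euler-plus-$q$-binomial option is exactly the paper's proof of Lemma~\ref{lem:H-theta} taken at $z=q$, and it finishes at once:
\[
\sum_{n\ge0}q^{n(k+1)}\qpoch{q^{n+1}}{q}
=\qpoch{q}{q}\sum_{n\ge0}\frac{q^{n(k+1)}}{\qpochn{q}{q}{n}}
=\frac{\qpoch{q}{q}}{\qpoch{q^{k+1}}{q}}
=\qpochn{q}{q}{k}.
\]
This cancels the $1/\qpochn{q}{q}{k}$ from Euler's expansion and leaves $\sum_{k\ge0}(-1)^kq^{k(k+1)/2}$.

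Commit to that route, because two of your other options fail as stated. Fine's identity has a single $q$-factorial in the denominator on its left side, so no choice of $a,b,t$ produces $\sum_n q^n/\qpochn{q}{q}{n}^2$. (The Heine limit with $a=0$, $c=z=q$, $b\to0$ does work.)

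The functional-equation fallback is false. Here $F(0)=\qpoch{q}{q}$ while $G(0)=1$. Expanding the $z$-factor by Euler and summing over $n$ as above gives
\[
F(z)=\frac{\qpoch{q}{q}}{\qpoch{zq}{q}}\sum_{k\ge0}\frac{\qpochn{zq}{q}{k}}{\qpochn{q}{q}{k}}(-z)^kq^{k(k+1)/2},
\]
which agrees with $G(z)$ only at $z=1$.
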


\begin{remark}\label{rem:parity-from-andrews}
The parity assertion in Theorem~\ref{thm:mod4} is also an immediate consequence of Andrews' formula for $v_d(q)$ in~\cite[Theorem~1, (1.4)]{AndrewsConcaveConvex}.
In our notation that formula reads
\[
S_1(q)=-\Theta(q)+2(-q;q)_\infty^2
\sum_{n\ge 0}q^{n(3n+1)/2}(1-q^{2n+1}),
\]
where
\[
\Theta(q)=\sum_{r\ge 0}(-1)^rq^{r(r+1)/2}.
\]
Reducing modulo $2$ gives $S_1(q)\equiv\Theta(q)\pmod{2}$, and hence $s_1(n)$ is odd exactly when $n$ is triangular.
Theorem~\ref{thm:mod4} refines this parity statement to a modulo $4$ evaluation.
\end{remark}

Since $An+B=\frac{r(r+1)}{2}$ if and only if $8An+8B+1=(2r+1)^2$, Theorem~\ref{thm:mod4} yields the next criterion.
\begin{corollary}\label{cor:progressions}
Let $A\ge 1$ and $0\le B<A$. Then
\[
s_1(An+B)\equiv 0\pmod{4}\qquad(n\ge 0)
\]
if and only if $8B+1$ is not a square modulo $8A$.
\end{corollary}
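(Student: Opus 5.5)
By Theorem~\ref{thm:mod4}, $s_1(m)\not\equiv 0\pmod 4$ exactly when $m$ is triangular, since $(-1)^r\equiv\pm1\not\equiv 0\pmod 4$. So the congruence $s_1(An+B)\equiv 0\pmod 4$ holds for all $n\ge 0$ if and only if no integer of the form $An+B$ with $n\ge 0$ is triangular. Using the equivalence $An+B=r(r+1)/2 \iff 8An+8B+1=(2r+1)^2$ noted before the corollary, the corollary reduces to the following claim: some $m=An+B$ with $n\ge 0$ is triangular if and only if $8B+1$ is a square modulo $8A$.

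For the forward direction, suppose $An+B=r(r+1)/2$ with $n\ge 0$. Then $(2r+1)^2=8An+8B+1\equiv 8B+1\pmod{8A}$, so $8B+1$ is a square modulo $8A$.

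For the converse, suppose $x^2\equiv 8B+1\pmod{8A}$ for some integer $x$. Reducing modulo $8$ gives $x^2\equiv 1\pmod 8$, so $x$ is odd. Replacing $x$ by $-x$ if necessary, write $x=2r+1$ with $r\ge 0$. Then $x^2-1=4r(r+1)=8\cdot\tfrac{r(r+1)}{2}$, and the congruence $x^2-1\equiv 8B\pmod{8A}$ yields $\tfrac{r(r+1)}{2}\equiv B\pmod A$.

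It remains to make $n$ nonnegative, which is the only point needing care. Any $r'\equiv r\pmod{A}$ has $r'(r'+1)\equiv r(r+1)\pmod{2A}$. To see this, write $r'=r+kA$; then
\[
r'(r'+1)-r(r+1)=kA(2r+1+kA),
\]
and $2r+1+kA$ is even whenever $kA$ is odd, so the difference is divisible by $2A$. Hence $\tfrac{r'(r'+1)}{2}\equiv B\pmod A$ as well. Choosing $r'=r+kA$ with $k$ large makes $\tfrac{r'(r'+1)}{2}\ge B$. Then $n=\bigl(\tfrac{r'(r'+1)}{2}-B\bigr)/A$ is a nonnegative integer, and $An+B$ is triangular. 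By Theorem~\ref{thm:mod4}, $s_1(An+B)$ is odd for this $n$, so the congruence fails.

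Combining both directions proves the corollary.
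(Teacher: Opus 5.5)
Your reduction to triangularity via Theorem~\ref{thm:mod4}, the forward direction, and the derivation of $r\ge 0$ with $\tfrac{r(r+1)}{2}\equiv B\pmod A$ are correct. They match the paper's one-line justification. The paragraph on making $n$ nonnegative, however, rests on a false claim: $r'\equiv r\pmod A$ does not imply $r'(r'+1)\equiv r(r+1)\pmod{2A}$ when $A$ is even. For example, $A=2$, $r=0$, $r'=2$ gives $0$ and $6$, which differ modulo $4$. Indeed, triangular numbers modulo $2$ have period $4$ in $r$, not $2$. Your parity argument implicitly treats ``$kA$ even'' as supplying the needed extra factor $2$, but that requires $k$ to be even. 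When $A$ is even and $k$ is odd, $2r+1+kA$ is odd, so $kA(2r+1+kA)$ is $A$ times an odd number and is not divisible by $2A$. Hence, for even $A$, shifting by an odd multiple of $A$ can destroy the congruence $\tfrac{r'(r'+1)}{2}\equiv B\pmod A$.

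The step is also unnecessary, so the repair is trivial. Since $\tfrac{r(r+1)}{2}\ge 0$ and $0\le B<A$, the integer $\tfrac{r(r+1)}{2}-B$ is a multiple of $A$ that exceeds $-A$. It is therefore $\ge 0$, and $n=\bigl(\tfrac{r(r+1)}{2}-B\bigr)/A$ is already a nonnegative integer with no shift at all. If you prefer to keep a shifting argument, shift $r$ by multiples of $2A$ instead. Equivalently, replace $x$ by $x+4Aj$, which preserves $x^2$ modulo $8A$ and keeps $x$ odd. With that paragraph replaced, your proof is complete.
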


For odd primes $p$, the congruence $8B+1\equiv 1\pmod{8}$ shows that the square condition modulo $8p$ is equivalent to the Legendre-symbol condition modulo $p$.
\begin{corollary}\label{cor:prime-modulus}
Let $p$ be an odd prime. Then
\[
s_1(pn+B)\equiv 0\pmod{4}\qquad(n\ge 0)
\]
if and only if
\[
\left(\frac{8B+1}{p}\right)=-1.
\]
Hence for each odd prime $p$ there are exactly $(p-1)/2$ such progressions modulo $p$.
\end{corollary}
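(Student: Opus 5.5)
The plan is to specialize Corollary~\ref{cor:progressions} to $A=p$ and then translate the condition ``$8B+1$ is a square modulo $8p$'' into a Legendre-symbol condition using the Chinese remainder theorem. By Corollary~\ref{cor:progressions}, $s_1(pn+B)\equiv 0\pmod 4$ for all $n\ge 0$ holds exactly when $8B+1$ is not congruent to $x^2$ modulo $8p$ for any integer $x$. Since $p$ is odd, $\gcd(8,p)=1$. Hence, by the Chinese remainder theorem, an integer $m$ is a square modulo $8p$ if and only if it is a square modulo $8$ and a square modulo $p$: given $x_1$ with $x_1^2\equiv m\pmod 8$ and $x_2$ with $x_2^2\equiv m\pmod p$, take $x\equiv x_1\pmod 8$ and $x\equiv x_2\pmod p$. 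The converse direction is trivial.

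Now $8B+1\equiv 1=1^2\pmod 8$, so the condition modulo $8$ is automatic. Therefore $8B+1$ is a square modulo $8p$ if and only if it is a square modulo $p$, where ``square'' includes $0$. The only point needing care is the case $p\mid 8B+1$. Then $8B+1\equiv 0^2\pmod p$ is a square, so by Corollary~\ref{cor:progressions} the progression is \emph{not} identically $0$ modulo $4$. This agrees with the statement, because in that case $\left(\frac{8B+1}{p}\right)=0\neq -1$. In the remaining case, where $p\nmid 8B+1$, being a square modulo $p$ is exactly $\left(\frac{8B+1}{p}\right)=1$. Combining the cases gives: the congruence holds for all $n$ if and only if $\left(\frac{8B+1}{p}\right)=-1$.

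For the count, observe that $8$ is invertible modulo $p$. Hence the map $B\mapsto 8B+1$ is a bijection of $\mathbb{Z}/p\mathbb{Z}$ onto itself, so as $B$ runs over $0\le B<p$, the value $8B+1$ runs over a complete residue system modulo $p$. Exactly $(p-1)/2$ of these residues are quadratic nonresidues, which gives exactly $(p-1)/2$ admissible progressions modulo $p$.

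There is no real obstacle here, since everything reduces to Corollary~\ref{cor:progressions}. The step needing attention is the boundary case $p\mid 8B+1$. There one must check that $0$ counts as a square modulo $p$, so that it is correctly excluded by the condition $\left(\frac{8B+1}{p}\right)=-1$.
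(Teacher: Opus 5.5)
Your proposal is correct and follows the paper's own route: specialize Corollary~\ref{cor:progressions} to $A=p$, then use $8B+1\equiv 1\pmod 8$ with the Chinese remainder theorem to reduce the square condition modulo $8p$ to the Legendre-symbol condition modulo $p$. Where the paper leaves things implicit, you spell out the boundary case $p\mid 8B+1$ (with $\left(\frac{8B+1}{p}\right)=0$) and the count via the bijection $B\mapsto 8B+1$ on $\mathbb{Z}/p\mathbb{Z}$.
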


For example, Corollary~\ref{cor:prime-modulus} gives
\[
s_1(3n+2)\equiv 0\pmod{4},
\qquad
s_1(5n+2)\equiv s_1(5n+4)\equiv 0\pmod{4},
\]
and
\[
s_1(7n+2)\equiv s_1(7n+4)\equiv s_1(7n+5)\equiv 0\pmod{4}.
\]
The odd and even companion series are
\[
T_o(q):=\sum_{m\ge 0} q^{2m}\frac{\qpochn{-q}{q}{2m}}{\qpochn{q}{q^2}{m+1}},
\qquad
T_e(q):=\sum_{m\ge 0}\frac{\qpochn{-1}{q}{2m}q^{2m}}{\qpochn{q^2}{q^2}{m}}.
\]
They arise naturally from the odd-even decomposition underlying our formulas for $S_1(q)$; see Proposition~\ref{prop:companions} below.

An important series in this paper is the eta-normalized series $C(q)$ defined by
\begin{equation}\label{eta-normal}
C(q)=(q;q)_\infty T_o(q).
\end{equation}
Writing
\[
C(q)=\sum_{n\geq 0}c(n) q^n,
\]
the following main result shows that the sequence $\{c(n)\}$ has rich arithmetic properties.
\begin{theorem}\label{thm:C-recursions}
For all $n\ge 0$,
\[
c(25n+8)=c(25n+13)=c(25n+18)=c(25n+23)=0,
\]
and
\[
c(25n+28)=-c(n).
\]
In particular, $c(n)$ changes sign infinitely often and assumes both positive and negative values infinitely often.
\end{theorem}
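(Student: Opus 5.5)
The approach is to find an explicit positive-definite theta-type (Hecke-type) expansion of $C(q)$ attached to the quadratic form $x^2+3y^2$, and then derive everything in Theorem~\ref{thm:C-recursions} from the arithmetic of that form at the prime $5$. The numerology is what suggests this. With $N(n):=24n+28$ we have $N(25n+28)=600n+700=25\,N(n)$. For $n\equiv 8,13,18,23\pmod{25}$ we get $N(n)\equiv 220\pmod{600}$ up to multiples of $120$, so $5\,\|\,N(n)$. This is exactly the situation of a prime that is inert in $\mathbb{Q}(\sqrt{-3})$, and $5$ is such a prime because $\left(\frac{-3}{5}\right)=-1$.

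\emph{Step 1 (the main obstacle).} I would establish an identity of the shape
\[
q^{7/6}C(q)=\sum_{(x,y)\in L}\varepsilon(x,y)\,q^{(x^2+3y^2)/24},
\]
where $L$ is a set of lattice points cut out by congruence conditions, and $\varepsilon(x,y)\in\{0,\pm1\}$ is a sign depending only on $x,y$ modulo some small modulus, for instance a product of Dirichlet characters.

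To obtain it, I would start from the odd--even decomposition of Proposition~\ref{prop:companions}, which expresses $T_o$ in terms of $S_1$-type pieces. I would combine this with Andrews' formula quoted in Remark~\ref{rem:parity-from-andrews}, which contains the partial theta piece $\sum q^{n(3n+1)/2}(1-q^{2n+1})$ multiplied by $(-q;q)_\infty^2$. Alternatively, I would insert a Bailey pair directly into the definition of $T_o(q)$: the factor $(-q;q)_{2m}/(q;q^2)_{m+1}$ is of the standard form for pairs relative to $q^2$. After multiplying by $(q;q)_\infty$ and using the Jacobi triple product to absorb the remaining infinite products, the result should be a double sum $\sum_{n,j}(-1)^{n+j}q^{Q(n,j)}$ whose summation range makes the form effectively definite. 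Completing squares should then turn $24Q+28$ into $x^2+3y^2$.

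I expect the hardest part to be identifying the correct Bailey pair, or the correct manipulation of Andrews' formula, and then checking the sign and region conditions. I would confirm the final identity numerically against the first several coefficients of $C(q)$ before proceeding.

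\emph{Step 2 (vanishing).} Given the identity, $c(n)$ equals the signed count $\sum\varepsilon(x,y)$ over $(x,y)\in L$ with $x^2+3y^2=24n+28$. This already gives the representability statement in the abstract. If $5\mid x^2+3y^2$, then $x^2\equiv 2y^2\pmod 5$, and since $2$ is a non-residue mod $5$ this forces $5\mid x$ and $5\mid y$. Hence $25\mid 24n+28$. For $n\equiv 8,13,18,23\pmod{25}$ we have $5\,\|\,24n+28$, so no representations exist and $c(n)=0$.

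\emph{Step 3 (recursion).} For $n'=25n+28$, every representation of $25(24n+28)$ has the form $(5x,5y)$ by Step~2. The map $(x,y)\mapsto(5x,5y)$ is therefore a bijection between the representations of $24n+28$ and those of $24n'+28$. I must check two things: that it preserves the congruence conditions defining $L$ (this holds because $5$ is a unit modulo $24$ and modulo the sign modulus, up to adjusting residues), and that $\varepsilon(5x,5y)=-\varepsilon(x,y)$. The latter should follow because $\varepsilon$ is a character evaluated at $5$ that takes the value $-1$, for example $\left(\frac{12}{5}\right)=-1$. Together these give $c(25n+28)=-c(n)$.

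\emph{Step 4 (sign changes).} Define $n_0=0$ and $n_{k+1}=25n_k+28$. Then $c(n_k)=(-1)^k c(0)=(-1)^k$, since $c(0)=1$ from the constant terms. Thus $c$ takes both signs infinitely often, and as $n_k\to\infty$ the sign changes infinitely often.
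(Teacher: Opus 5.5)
Your route differs from the paper's. The paper never uses a lattice sum for this theorem. It takes the residue-$4$ part modulo $5$ of $2qC(q)=\Psi(q)P(q)-(q;q)_\infty\Theta(q)$. Since $\Psi,\Theta$ have exponents only in classes $0,1,3$ and $P,(q;q)_\infty$ only in classes $0,1,2 \pmod 5$, a single pair of classes contributes. Lemma~\ref{lem:five-parts} then gives $\sum_{n}c(5n+3)q^n=-q^5C(q^5)$, from which both the vanishing and the recursion are read off.

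Your Step~1 is left as an expectation, but it is not a real obstacle, and no Bailey pair is needed. Your first suggestion, Proposition~\ref{prop:companions} combined with Andrews' formula, gives exactly \eqref{C-basic}. Splitting both partial theta factors by the parity of the summation index (Proposition~\ref{prop:C-structure}) and completing squares as in the proof of Theorem~\ref{thm:C-local} yields
\[
c(n)=\sum_{\substack{x,y>0,\ \gcd(x,6)=1,\ 4\mid x+y\\ x^2+3y^2=24n+28}}\left(\frac{x}{3}\right).
\]
With this, Steps~2--4 go through, but the justification in Step~3 needs correcting. That $5$ is a unit modulo $24$ is not what matters. You need $L$ to be stable under $(x,y)\mapsto(5x,5y)$ and its inverse. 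This holds because $5$ is odd and prime to $6$: the conditions $x,y>0$, $\gcd(x,6)=1$, $4\mid x+y$ are preserved in both directions, while the classes $x\equiv 1,5,7,11\pmod{12}$ are permuted $1\leftrightarrow5$, $7\leftrightarrow11$. The sign flip is $\left(\frac{5}{3}\right)=-1$.

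The paper's dissection is shorter and needs only the residue patterns of four one-variable series. Your lattice argument makes the mechanism transparent and works verbatim for every odd prime $p\equiv 2\pmod 3$. It gives $c\bigl(p^2n+\tfrac{7(p^2-1)}{6}\bigr)=-c(n)$ (for instance $c(121n+140)=-c(n)$), together with the vanishing statements of Theorem~\ref{thm:C-local} and Corollary~\ref{cor:C-local-progressions}, in one stroke.
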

Iterating the relation \(c(25n+28)=-c(n)\) therefore produces infinite towers of coefficients with alternating signs.

The next result shows that the support of $C(q)$ is constrained by the binary quadratic form $x^2+3y^2$,
and its corollary gives exact vanishing progressions modulo $p^2$ for every prime $p\equiv 2\pmod{3}$.

\begin{theorem}\label{thm:C-local}
If $c(n)\neq 0$, then
\[
24n+28=x^2+3y^2
\]
for some integers $x$ and $y$. Consequently, if $p\equiv 2\pmod{3}$ is an odd prime and $p$ divides $6n+7$ to odd order, then
\[
c(n)=0.
\]
\end{theorem}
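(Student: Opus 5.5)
The plan is to prove the first assertion by exhibiting $C(q)$ as an indefinite (Hecke-type) theta series whose exponents are all of the form $(x^2+3y^2-28)/24$. The second assertion then follows from elementary facts about the form $x^2+3y^2$.

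\emph{Step 1: a Hecke-type double sum for $T_o(q)$.} I would start from the odd--even decomposition behind Proposition~\ref{prop:companions}, which expresses $T_o(q)$ through the same $q$-hypergeometric data as $S_1(q)$. I would then insert a Bailey pair, of the kind Andrews uses for $v_d(q)$ in~\cite{AndrewsConcaveConvex}, into the denominator $(q;q^2)_{m+1}$ and the factor $(-q;q)_{2m}$. The target is an identity
\[
(q;q)_\infty T_o(q)=\sum_{(j,k)\in R}\varepsilon(j,k)\,q^{Q(j,k)},\qquad \varepsilon(j,k)\in\{0,\pm1\},
\]
where $R$ is a cone-type region, as in Andrews' Hecke-type identities, and $Q$ is an indefinite binary quadratic form plus a linear term.

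The normalisation is dictated by the statement. The identity $24n+28=x^2+3y^2$ with base solution $28=1+27$ suggests the shape
\[
24Q+28=(6j+1)^2+27(2k+1)^2 .
\]
Here the first square comes from the pentagonal exponents $j(3j+1)/2$ supplied by $(q;q)_\infty$ via Euler's theorem. The second comes from triangular-type exponents $9k(k+1)/2$, or a variant with a cross term that can be diagonalised.

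\emph{Step 2: diagonalise.} After a linear change of variables (completing the square), I would write $24Q(j,k)+28=x^2+3y^2$ with $x,y$ integral linear forms in $j,k$. Every exponent $n$ appearing in $C(q)$ then satisfies $24n+28=x^2+3y^2$. Hence $c(n)\neq0$ forces such a representation, which is the first claim. As a consistency check, the relation $c(25n+28)=-c(n)$ of Theorem~\ref{thm:C-recursions} corresponds to
\[
24(25n+28)+28=25(24n+28),
\]
that is, to multiplication of the form by $5$. This supports the expected shape.

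\emph{Step 3: the local consequence.} Let $p\equiv2\pmod 3$ be an odd prime dividing $6n+7$ to odd order. Since $24n+28=4(6n+7)$ and $p$ is odd, $p$ also divides $24n+28$ to odd order. Suppose $x^2+3y^2\equiv0\pmod p$ with $p\nmid y$. Then $-3$ is a square mod $p$, which is impossible because $\left(\frac{-3}{p}\right)=\left(\frac{p}{3}\right)=-1$. So $p\mid x$ and $p\mid y$, and then $p^2\mid x^2+3y^2$. Dividing by $p^2$ and descending shows that $p$ divides any nonzero value of $x^2+3y^2$ to even order. This contradicts the odd-order hypothesis, so $c(n)=0$.

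The main obstacle is Step~1: finding the right Bailey pair or transformation so that $T_o(q)$ collapses to a genuine Hecke-type sum. The sign function $\varepsilon$ and region $R$ must be controlled precisely enough to see that only these exponents occur. I would first try the conjugate Bailey pair approach, via Andrews' $q$-analogue of Bailey's lemma with $(-q;q)_{2m}/(q;q^2)_{m+1}$ as the weight. If that fails, I would instead express $T_o$ through $S_1$ and Andrews' formula in Remark~\ref{rem:parity-from-andrews}, and then separate even and odd powers.
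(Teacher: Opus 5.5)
\textbf{There is a genuine gap in Step~1.} You never establish a representation of $C(q)$. Moreover, the representation you aim for cannot exist in the form you describe.

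First, it is internally inconsistent. Suppose $24Q(j,k)+28=x^2+3y^2$ with $x,y$ affine-linear in $(j,k)$. Then the quadratic part of $Q$ is $\tfrac{1}{24}$ times a sum of squares, so $Q$ cannot be indefinite. A genuine Hecke-type (indefinite) series is therefore incompatible with your Step~2.

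Second, the specific normalisation you commit to, $24Q+28=(6j+1)^2+27(2k+1)^2$, is refuted by the data. We have $c(6)=-1$, yet $24\cdot 6+28=172$ is not of that form: $172-27=145$ is not a square, and $172<27\cdot 9$. The representations of $172$ by $x^2+3y^2$ are $13^2+3\cdot 1^2$, $5^2+3\cdot 7^2$ and $8^2+3\cdot 6^2$. The base case $28=1^2+3\cdot 3^2$ misled you. The $3$ there is $2m+1$ with $m=1$, not a forced factor of $3$. The second component should be the ordinary triangular numbers $m(m+1)/2$, not $9k(k+1)/2$.

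\textbf{The missing idea is that no Bailey pair or Hecke-type sum is needed.} Combining \eqref{cor:companions} with $T_e=P/(q;q)_\infty$ gives \eqref{C-basic}:
\[
2qC(q)=\Psi(q)P(q)-(q;q)_\infty\Theta(q).
\]
This is a difference of products of one-variable series.
\begin{itemize}
\item Every exponent of $P$ and of $(q;q)_\infty$ is a generalized pentagonal number $k(3k+1)/2$ with $k\in\mathbb Z$, since $j(3j+1)/2+2j+1=(j+1)(3j+2)/2$.
\item Every exponent of $\Psi$ and of $\Theta$ is triangular.
\item Hence each exponent $N$ on the right satisfies $24N+4=(6k+1)^2+3(2m+1)^2$.
\item Cancellation can only remove exponents, so $c(n)\neq 0$ forces $N=n+1$ to occur, which gives $24n+28=x^2+3y^2$.
\end{itemize}
The paper does exactly this, organised through the parity split of Proposition~\ref{prop:C-structure}.

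Your fallback, Andrews' formula together with $S_1=\Theta+4qT_o$, leads directly to \eqref{C-basic}, because $(q;q)_\infty(-q;q)_\infty^2=\Psi(q)$. That route, not the Hecke-type one, closes the gap. Your Step~3 is correct and coincides with the paper's argument.
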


\begin{corollary}\label{cor:C-local-progressions}
Let $p\equiv 2\pmod{3}$ be an odd prime. Let $r_p$ be the unique residue class modulo $p$ satisfying
\[
6r_p+7\equiv 0\pmod{p}.
\]
Among the $p$ lifts of $r_p$ modulo $p^2$, exactly one residue class satisfies
\[
6r+7\equiv 0\pmod{p^2}.
\]
For each of the remaining $p-1$ lifts $r$, we have
\[
c(p^2n+r)=0\qquad(n\ge 0).
\]
\end{corollary}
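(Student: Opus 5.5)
The plan is to derive Corollary~\ref{cor:C-local-progressions} directly from Theorem~\ref{thm:C-local}; the only work is an elementary analysis of $p$-adic valuations.

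\emph{Step 1: the two lifting counts.} Since $p$ is odd and $p\neq 3$ (because $p\equiv 2\pmod 3$), the number $6$ is invertible modulo $p$. Hence $r_p\equiv -7\cdot 6^{-1}\pmod p$ is well defined. The same reasoning modulo $p^2$ shows that $6r+7\equiv 0\pmod{p^2}$ has exactly one solution $r^*$ modulo $p^2$. Reducing $r^*$ modulo $p$ gives a solution of $6r+7\equiv0\pmod p$, so $r^*\equiv r_p\pmod p$. Therefore $r^*$ is one of the $p$ lifts $r_p+kp$ with $0\le k<p$, and it is the only lift satisfying the congruence modulo $p^2$. This proves the counting statements.

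\emph{Step 2: the remaining lifts have valuation exactly one.} Fix one of the other $p-1$ lifts $r$, and fix $n\ge 0$. Set $m=p^2n+r$. Then
\[
6m+7=6p^2n+6r+7\equiv 6r+7\pmod{p^2}.
\]
Since $r\equiv r_p\pmod p$, we have $p\mid 6r+7$. Since $r\not\equiv r^*\pmod{p^2}$, we have $p^2\nmid 6r+7$. It follows that $p\mid 6m+7$ and $p^2\nmid 6m+7$, so $p$ divides $6m+7$ exactly to the first power, which is an odd order.

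\emph{Step 3: apply the theorem.} By the second assertion of Theorem~\ref{thm:C-local}, $c(m)=0$, that is, $c(p^2n+r)=0$ for all $n\ge0$.

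The argument has no genuine obstacle. The substantive input is Theorem~\ref{thm:C-local}, which we assume. If one wanted to check its ``consequently'' clause, the mechanism is as follows. We have $24n+28=4(6n+7)$. A prime $p\equiv2\pmod3$ is inert in $\mathbb{Z}[\omega]$, so it must divide to even order any integer represented by $x^2+3y^2$ (equivalently, by the norm form of the order $\mathbb{Z}[\sqrt{-3}]$). That clause is already part of the statement we are taking as given.
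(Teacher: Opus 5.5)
Your proposal is correct and takes the same route as the paper's proof. Invertibility of $6$ modulo $p^2$ gives exactly one lift with $6r+7\equiv 0\pmod{p^2}$; every other lift $r$ makes $p$ divide $6(p^2n+r)+7$ exactly once; the second assertion of Theorem~\ref{thm:C-local} then forces $c(p^2n+r)=0$ (your inert-prime aside is just an equivalent alternative to the paper's argument via $\left(\frac{-3}{p}\right)=-1$ and is not needed here).
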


For $p=5$, Corollary~\ref{cor:C-local-progressions} yields the four vanishing progressions in Theorem~\ref{thm:C-recursions}. For $p=11$, it gives
\[
c(121n+r)=0\qquad(n\ge 0),
\]
for
\[
r\in\{8,30,41,52,63,74,85,96,107,118\}.
\]
The remainder of the paper will be organized as follows. The partition-theoretic results for the even companion and the comparison with OEIS~A344650 are collected in Section~\ref{sec:partition-identities}. The analytic formulas for the companion series are collected in Section~\ref{sec:analytic}. The proofs are given in Sections~\ref{sec:proof-thm1}--\ref{sec:proof-A344650}, and the paper closes in Section~\ref{sec:remarks} with final remarks, problems, and conjectures.
\section{$q$-Series Background}\label{sec:background}
In this section we collect several well-known facts from the theory of basic hypergeometric series that will be needed in our proofs.
We start with the $q$-binomial theorem~\cite[Eq. (2.2.1)]{Andrews}
\begin{equation}\label{q-binomial}
\sum_{n\geq 0}\fr{(a;q)_n z^n}{(q;q)_n} = \fr{(az;q)_\infty}{(z;q)_\infty},
\end{equation}
Euler's identity~\cite[Eq. (1.2.5)]{Andrews}
\begin{equation}\label{Euler0}
(-q;q)_\infty = \fr{1}{(q;q^2)_\infty},
\end{equation}
Euler's expansion~\cite[Eq. (2.2.6)]{Andrews}
\begin{equation}\label{Euler}
\qpoch{z}{q}=\sum_{n\ge 0}\frac{(-1)^n q^{n(n-1)/2}z^n}{\qpochn{q}{q}{n}},
\end{equation}
and Gauss' sum~\cite[Eq.~(2.2.13)]{Andrews}
\begin{equation}\label{Gauss-sum}
\fr{(q^2;q^2)_\infty}{(q;q^2)_\infty}=\sum_{n\ge 0}q^{n(n+1)/2}.
\end{equation}
We need the following false theta evaluation~\cite[Eq. (9.4.14)]{AndrewsBerndtI}
\begin{equation}\label{falsetheta}
\sum_{n\ge 0}\frac{(-q)^n}{\qpochn{-q^2}{q^2}{n}}
=\sum_{n\ge 0} q^{n(3n+1)/2}(1-q^{2n+1})
\end{equation}
and Euler's pentagonal theorem~\cite[Eq. (1.3.1)]{Andrews}
\begin{equation}\label{pentagonal}
(q;q)_\infty =\sum_{n\ge 0}(-1)^n q^{n(3n+1)/2}(1-q^{2n+1}).
\end{equation}
In addition, we will need Jacobi's triple product~\cite[Eq. (2.2.10)]{Andrews}
\begin{equation}\label{Jacobi-triple}
\sum_{n=-\infty}^{\infty} z^n q^{n^2}=(-zq,-q/z,q^2;q^2)_\infty
\end{equation}

Finally, recall the definition of the basic hypergeometric series~\cite[Eq.~(1.2.22)]{GasperRahman}:
\[
  \pFq{\,r+1\,}{\,r\,}
      {a_1, a_2, \dots, a_{r+1}}
      {b_1, b_2, \dots, b_r}
      {q; z}
  =
  \sum_{n=0}^{\infty}
    \frac{(a_1;q)_n (a_2;q)_n \cdots (a_{r+1};q)_n}
         {(q;q)_n (b_1;q)_n \cdots (b_r;q)_n}
    \, z^n.
\]
Heine's first transformation~\cite[(III.1)]{GasperRahman} states that
\begin{equation}\label{Heine-1}
{}_2\phi_1\!\left(\begin{matrix}a,b\\ c\end{matrix};q,z\right)
=\frac{\qpoch{b,az}{q}}{\qpoch{c,z}{q}}
{}_2\phi_1\!\left(\begin{matrix}c/b,z\\ az\end{matrix};q,b\right).
\end{equation}
\section{Partition identities and strict odd-length partitions}\label{sec:partition-identities}
The coefficients of the even companion admit a direct overpartition interpretation. Recall that an overpartition is a partition in which the first occurrence of a part may be overlined; see Corteel and Lovejoy~\cite{CorteelLovejoy}. Write
\[
T_e(q)=1+\sum_{N\ge 1} t_e(N)q^N.
\]
For $m\ge 1$, the $m$th summand in the defining series for $T_e(q)$ can be written as
\[
2\frac{q^{2m}}{1-q^{2m}}
\prod_{j=1}^{m-1}\frac{1+q^{2j}}{1-q^{2j}}
\prod_{j=1}^{m}(1+q^{2j-1}).
\]
Thus, for $N\ge 1$, the coefficient $t_e(N)$ counts overpartitions of $N$ such that the largest part is even, only even parts may be overlined, and the odd parts are distinct; the factor $2q^{2m}/(1-q^{2m})$ records the two choices for the overline status of the largest part $2m$. In particular, $t_e(N)$ is even for every $N\ge 1$.

\begin{definition}\label{def:A-B}
For $N\ge 1$, let $a_e(N)$ denote the number of ordinary partitions of $N$ whose first missing part among the positive integers not congruent to $1\pmod{4}$ is congruent to $3\pmod{4}$. Equivalently, among the candidate parts
\[
2,3,4,6,7,8,10,11,12,\dots,
\]
the smallest candidate that does not occur is of the form $4m+3$.

For $N\ge 1$, let $b_e(N)$ denote the number of overpartitions of $N$ such that the largest part is even and not overlined, only even parts may be overlined, and the odd parts are distinct.
\end{definition}

The direct interpretation of \(t_e(N)\) above shows immediately that
\begin{equation}\label{te-be}
t_e(N)=2b_e(N)\qquad(N\ge 1).
\end{equation}

\begin{remark}\label{rem:Te-example}
For $N=6$, the eight overpartitions counted by $t_e(6)$ are
\[
(6),\qquad (\overline{6}),\qquad (4,2),\qquad (\overline{4},2),\qquad (4,\overline{2}),\qquad (\overline{4},\overline{2}),\qquad (2,2,2),\qquad (\overline{2},2,2).
\]
Thus $t_e(6)=8$.

Restricting to the overpartitions with nonoverlined largest part leaves the four objects counted by $b_e(6)$:
\[
(6),\qquad (4,2),\qquad (4,\overline{2}),\qquad (2,2,2).
\]
Hence $b_e(6)=4$.

The ordinary partitions counted by $a_e(6)$ are
\[
(4,2),\qquad (2,2,2),\qquad (2,2,1,1),\qquad (2,1,1,1,1).
\]
In each case the relevant candidate parts begin
\[
2,3,4,6,7,8,\dots,
\]
and the first missing candidate is $3$. Thus $a_e(6)=b_e(6)=t_e(6)/2=4$.
\end{remark}

\begin{theorem}\label{thm:Te-partitions}
For every $N\ge 1$,
\[
a_e(N)=b_e(N)=\frac{t_e(N)}{2}.
\]
\end{theorem}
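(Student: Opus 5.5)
The plan is to prove the two equalities separately. The equality $b_e(N)=t_e(N)/2$ is already \eqref{te-be}, so it remains to show $a_e(N)=t_e(N)/2$ for $N\ge 1$. I will compute both generating functions in closed form as $1/(q;q)_\infty$ times a false theta series, and then compare them using Euler's pentagonal theorem \eqref{pentagonal}.

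\emph{Step 1: the generating function for $a_e$.} Consider a partition whose first missing candidate is $4m+3$. Then every candidate part below $4m+3$ (that is, every integer $<4m+3$ not congruent to $1\pmod 4$) occurs at least once, the part $4m+3$ does not occur, and all other parts are unrestricted. The sum of those forced candidates is
\[
\sum_{k=0}^{m-1}\bigl((4k+2)+(4k+3)+(4k+4)\bigr)+(4m+2)=6m^2+7m+2.
\]
Hence
\[
\sum_{N\ge1}a_e(N)q^N=\frac{1}{(q;q)_\infty}\sum_{m\ge0}q^{6m^2+7m+2}\bigl(1-q^{4m+3}\bigr).
\]
Every partition counted here has positive size, since $6m^2+7m+2\ge 2$.

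\emph{Step 2: a closed form for $T_e$.} Use $(-1;q)_{2m}=(-1;q^2)_m(-q;q^2)_m$. This writes $T_e(q)$ as the series ${}_2\phi_1(-1,-q;0;q^2,q^2)$ in base $q^2$. Apply Heine's transformation \eqref{Heine-1} with base $q^2$, $a=-1$, $b=-q$, $c=0$, $z=q^2$. Since $c/b=0$ and $(z;q^2)_n=(q^2;q^2)_n$ cancels, the transformed series is
\[
\sum_{n\ge0}\frac{(-q)^n}{(-q^2;q^2)_n},
\]
which is the left side of \eqref{falsetheta}. The prefactor is
\[
\frac{(-q,-q^2;q^2)_\infty}{(q^2;q^2)_\infty}=\frac{(-q;q)_\infty}{(q^2;q^2)_\infty}=\frac{1}{(q;q)_\infty},
\]
where the last equality uses \eqref{Euler0}. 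Therefore
\[
T_e(q)=\frac{1}{(q;q)_\infty}\sum_{n\ge0}q^{n(3n+1)/2}\bigl(1-q^{2n+1}\bigr).
\]
I expect this step to be the main obstacle. One has to check that the $c=0$ specialization of Heine is legitimate: it holds as a limit, and convergence is fine since $|q|<1$ and $|b|=|q|<1$. One also has to make sure the bookkeeping of the two bases is correct.

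\emph{Step 3: comparison.} Subtract \eqref{pentagonal} from the false theta series $\sum_{n\ge0}q^{n(3n+1)/2}(1-q^{2n+1})$. The terms with $n$ even cancel. The terms with $n$ odd are doubled, because in \eqref{pentagonal} they carry the sign $(-1)^n=-1$. Put $n=2m+1$; then $n(3n+1)/2=(2m+1)(3m+2)=6m^2+7m+2$ and $2n+1=4m+3$. This gives
\[
(q;q)_\infty T_e(q)-(q;q)_\infty=2\sum_{m\ge0}q^{6m^2+7m+2}\bigl(1-q^{4m+3}\bigr).
\]
Dividing by $(q;q)_\infty$ and comparing with Step 1 yields
\[
T_e(q)-1=2\sum_{N\ge1}a_e(N)q^N,
\]
that is, $a_e(N)=t_e(N)/2$ for every $N\ge1$. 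Combined with \eqref{te-be}, this proves the theorem.
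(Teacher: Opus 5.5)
Your proposal is correct and follows essentially the same route as the paper: the closed form $T_e(q)=P(q)/(q;q)_\infty$ via Heine's transformation and \eqref{falsetheta} (the paper's Proposition~\ref{prop:Te-closed}), then subtracting the pentagonal theorem to isolate the odd-indexed terms, then reading $q^{(2m+1)(3m+2)}(1-q^{4m+3})/(q;q)_\infty$ as forced parts $2,3,4,\dots,4m,4m+2$ with $4m+3$ forbidden. The only difference is cosmetic: you cite \eqref{te-be} for $b_e(N)=t_e(N)/2$, whereas the paper re-justifies it inside the proof by the involution toggling the overline on the largest part.
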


We now turn to strict odd-length partitions and OEIS A344650.

OEIS A344650 is the sequence whose $n$-th term counts the number of strict partitions of $2n$ into an odd number of parts; equivalently, it counts partitions of $2n$ into an odd number of distinct parts~\cite{OEISA344650}. We denote this sequence by $a(n)$ and write
\[
A(q):=\sum_{n\ge 0} a(n)q^n.
\]
The comparison with $T_o(q)$ is given later in Remark~\ref{rem:A344650-comparison}.

For partitions into distinct parts, let $p_d(k,n)$ denote the number of partitions of $n$ into exactly $k$ distinct positive parts.

\begin{definition}\label{def:Podd}
Define
\[
p_{\mathrm{odd}}(n):=\sum_{r\ge 0} p_d(2r+1,n),
\qquad
P_{\mathrm{odd}}(q):=\sum_{n\ge 0} p_{\mathrm{odd}}(n)q^n.
\]
Thus $p_{\mathrm{odd}}(n)$ counts partitions of $n$ into an odd number of distinct parts, and we clearly have
\begin{equation}\label{a-podd}
a(n)=p_{\mathrm{odd}}(2n)\qquad(n\ge 0).
\end{equation}
\end{definition}

For example, when $n=8$, the only admissible odd lengths are $1$ and $3$, since the smallest strict partition of odd length $5$ is $1+2+3+4+5>8$. The strict partitions of $8$ into an odd number of parts are
\[
(8),\qquad (5,2,1),\qquad (4,3,1).
\]
Therefore
\begin{align*}
p_{\mathrm{odd}}(8)&=3,\\
A(q)&=q+q^2+2q^3+3q^4+\cdots,\\
a(4)&=p_{\mathrm{odd}}(8)=3.
\end{align*}

\begin{theorem}\label{thm:A344650}
We have
\begin{align}
P_{\mathrm{odd}}(q)
&=\sum_{r\ge 0}\frac{q^{(2r+1)(2r+2)/2}}{\qpochn{q}{q}{2r+1}} \label{Podd-1} \\
&=\frac12\bigl(\qpoch{-q}{q}-\qpoch{q}{q}\bigr) \label{Podd-2} \\
A(q)
&=\frac{1}{\qpoch{q}{q}}
\left(\sum_{m\in\mathbb Z} q^{4m^2-m}\right)
\left(\sum_{m\ge 1}(-1)^{m-1}q^{m^2}\right). \label{A344650-A}
\end{align}
\end{theorem}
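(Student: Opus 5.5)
The first two identities follow from the two-variable generating function for strict partitions by number of parts. By Euler's expansion \eqref{Euler} with $q$ fixed and $z\mapsto -zq$,
\[
\qpoch{-zq}{q}=\sum_{k\ge 0}\frac{z^k q^{k(k+1)/2}}{\qpochn{q}{q}{k}}=\sum_{k,n\ge0}p_d(k,n)z^kq^n .
\]
Keeping only odd $k=2r+1$ gives \eqref{Podd-1}. For \eqref{Podd-2}, the odd part in $z$ is $\tfrac12(F(1)-F(-1))$, where $F(z)=\qpoch{-zq}{q}$. This equals $\tfrac12(\qpoch{-q}{q}-\qpoch{q}{q})$.

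For \eqref{A344650-A}, we must extract the even-indexed coefficients of $\tfrac12(\qpoch{-q}{q}-\qpoch{q}{q})$, that is, compute $A(q^2)=\tfrac14\bigl(G(q)+G(-q)\bigr)$ with $G(q)=\qpoch{-q}{q}-\qpoch{q}{q}$. The pentagonal number theorem \eqref{pentagonal} handles the even part of $\qpoch{q}{q}$ explicitly. The harder piece is the even part of $\qpoch{-q}{q}$. I would write $\qpoch{-q}{q}=\qpoch{q^2}{q^2}/\qpoch{q}{q}$ and use Gauss/Jacobi-type dissections. The cleanest route seems to be to put everything over the common denominator $\qpoch{q^2}{q^2}$, which is what $1/\qpoch{q}{q}$ becomes after $q^2\mapsto q$:
\[
\qpoch{q^2}{q^2}\,A(q^2)=\tfrac14\Bigl(\qpoch{q^2}{q^2}\,[\qpoch{-q}{q}+\qpoch{q}{-q}]-\qpoch{q^2}{q^2}\,[\qpoch{q}{q}+\qpoch{-q}{-q}]\Bigr).
\]
Each bracket is then a product of theta-type series. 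Specifically, $\qpoch{q^2}{q^2}\qpoch{-q}{q}=\qpoch{q^2}{q^2}^2/\qpoch{q}{q}$, which is a product of the Gauss series \eqref{Gauss-sum} and $\qpoch{q^2}{q^2}$. Both of these have explicit theta expansions via Jacobi's triple product \eqref{Jacobi-triple}.

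I then want to show that the combination equals $\bigl(\sum_m q^{8m^2-2m}\bigr)\bigl(\sum_{m\ge1}(-1)^{m-1}q^{2m^2}\bigr)$. An alternative, probably easier, approach is to verify the equivalent identity directly in $q$. Using $\sum_{m\ge1}(-1)^{m-1}q^{m^2}=\tfrac12(1-\qpoch{q}{q}^2/\qpoch{q^2}{q^2})$, which is Gauss's identity for $\sum_{m\in\mathbb Z}(-1)^mq^{m^2}$, the claim becomes
\[
2\qpoch{q}{q}A(q)=\Bigl(\sum_{m\in\mathbb Z}q^{4m^2-m}\Bigr)\Bigl(1-\frac{\qpoch{q}{q}^2}{\qpoch{q^2}{q^2}}\Bigr).
\]
By Jacobi's triple product, $\sum_m q^{4m^2-m}=(-q^3,-q^5,q^8;q^8)_\infty$. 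I would then compute $2A(q)$ as the even part of $\qpoch{-q}{q}$ minus the even part of $\qpoch{q}{q}$, both after $q^2\mapsto q$.

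For $\qpoch{-q}{q}=1/\qpoch{q}{q^2}$, the known 2-dissection gives an even part equal to $\frac{\qpoch{q^8}{q^8}^{5}}{\qpoch{q^2}{q^2}^2\qpoch{q^{16}}{q^{16}}^2}$, up to standard form. This follows from $\qpoch{-q}{q}=\qpoch{q^2}{q^2}/\qpoch{q}{q}$ and the 2-dissection of $1/\qpoch{q}{q}$. For $\qpoch{q}{q}$, the even part is read off \eqref{pentagonal} by parity of pentagonal numbers, giving a theta series in $q^2$ that the triple product turns into a product.

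After the substitution $q^2\mapsto q$, each side becomes a quotient of eta-type products. The identity then reduces to a two-term theta relation, essentially the quintuple product or a Jacobi relation between $\sum q^{4m^2-m}$ and the $8$-dissection pieces. This last step is the main obstacle. If product manipulation stalls, I would fall back on the theory of modular forms: check that both sides are weakly holomorphic modular forms of the same weight and level (multiplied by a suitable eta power), and then verify enough coefficients to exceed the Sturm bound.
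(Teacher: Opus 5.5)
Your derivation of \eqref{Podd-1} and \eqref{Podd-2} from the product $(-zq;q)_\infty$ is fine. So is your reduction of \eqref{A344650-A} to the equivalent form $2(q;q)_\infty A(q)=\bigl(\sum_m q^{4m^2-m}\bigr)\bigl(1-(q;q)_\infty^2/(q^2;q^2)_\infty\bigr)$. The gap is that your proof of \eqref{A344650-A} stops exactly where the work begins, and the one concrete dissection you supply is false.

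\emph{The dissection of $(-q;q)_\infty$.} The series $(q^8;q^8)_\infty^5/\bigl((q^2;q^2)_\infty^2(q^{16};q^{16})_\infty^2\bigr)$ is the even part of $(q^2;q^2)_\infty^3/(q;q)_\infty^2$, not of $(-q;q)_\infty$. Its coefficient of $q^2$ is $2$, whereas $[q^2](-q;q)_\infty=1$. Also, $(q^2;q^2)_\infty(-q;q)_\infty$ is $\Psi(q)$ itself, not $\Psi(q)$ times $(q^2;q^2)_\infty$. Had you used that, the even part $\sum_{m\in\mathbb Z}q^{8m^2+2m}$ of $\Psi(q)$ would have given the right answer.

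\emph{The dissection of $(q;q)_\infty$.} The pentagonal theorem gives its even part as a difference of two theta series, $\sum_k q^{24k^2-2k}-\sum_k q^{24k^2-14k+2}$. Turning that into a product needs something like the quintuple product, which is precisely the step you leave open as the ``main obstacle.''

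\emph{The Sturm-bound fallback.} This is only a plan, and as stated it does not apply to your identity. After multiplying by $(q;q)_\infty$, each side mixes weights $1/2$ and $1$: the pieces coming from $(-q;q)_\infty$ and from $(q;q)_\infty$ have different weights. You would have to split it into two homogeneous identities, fix levels, and carry out the check.

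The missing idea, which is the paper's route, is to split off the factors that are already even:
\[
(-q;q)_\infty=(-q;q^2)_\infty(-q^2;q^2)_\infty,\qquad (q;q)_\infty=(q;q^2)_\infty(q^2;q^2)_\infty,
\]
\[
(q;-q)_\infty=(q;q^2)_\infty(-q^2;q^2)_\infty,\qquad (-q;-q)_\infty=(-q;q^2)_\infty(q^2;q^2)_\infty.
\]
This gives
\[
A(q^2)=\tfrac14\bigl((-q;q^2)_\infty+(q;q^2)_\infty\bigr)\bigl((-q^2;q^2)_\infty-(q^2;q^2)_\infty\bigr).
\]
Jacobi's triple product with $q\to q^2$ and $z=\pm q^{-1}$ gives $(q^4;q^4)_\infty(\mp q;q^2)_\infty=\sum_{n\in\mathbb Z}(\pm1)^nq^{2n^2-n}$. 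Hence the first factor is $\frac{2}{(q^4;q^4)_\infty}\sum_{m\in\mathbb Z}q^{8m^2-2m}$. Gauss's identity $\sum_{m\in\mathbb Z}(-1)^mq^{2m^2}=(q^2;q^2)_\infty/(-q^2;q^2)_\infty$ evaluates the second factor.

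Equivalently, in your setup, the even parts of $(-q;q)_\infty$ and $(q;q)_\infty$ are $\frac{1}{(q^2;q^2)_\infty}\sum_m q^{8m^2-2m}$ and $\frac{(q^2;q^2)_\infty}{(q^4;q^4)_\infty}\sum_m q^{8m^2-2m}$. With these, your equivalent form follows at once, with no quintuple product or modular forms needed.
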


\begin{remark}\label{rem:A344650-comparison}
We now make explicit the comparison between $T_o(q)$ and OEIS A344650. By Theorem~\ref{thm:A344650},
\[
\frac{(q;q)_\infty}{q}A(q)=1+q^5-q^6+q^{11}+q^{13}+q^{14}-q^{15}+O(q^{16}).
\]
A direct expansion gives
\[
C(q)=1+q^5-q^6+q^{11}+q^{14}-q^{15}+O(q^{16}).
\]
Hence $C(q)$ and $\frac{(q;q)_\infty}{q}A(q)$ agree through $q^{12}$ and first differ at $q^{13}$. Because $(q;q)_\infty^{-1}=1+O(q)$, the same is true of $T_o(q)$ and $q^{-1}A(q)$. In particular,
\[
[q^{13}]T_o(q)=110,
\qquad
 a(14)=111.
\]
\end{remark}
\section{Analytic formulas for the companion series}\label{sec:analytic}
In this section we collect the analytic identities for the companion series and for the eta-normalized series
\[
C(q):=(q;q)_\infty T_o(q).
\]

For convenience, write
\begin{align}
\Theta(q)&:=\sum_{r\ge 0}(-1)^r q^{r(r+1)/2},\label{Theta-def} \\
\Psi(q)&:=\frac{\qpoch{q^2}{q^2}}{\qpoch{q}{q^2}}.\label{Psi-def}
\end{align}
Note that by Gauss' sum~\eqref{Gauss-sum} we have
\begin{equation}
\Psi(q)=\sum_{n\ge 0}q^{n(n+1)/2}.
\end{equation}

\begin{proposition}\label{prop:companions}
We have
\begin{align}
S_1(q) &=\Psi(q)T_e(q)+2qT_o(q), \label{S1-id1}\\
&=\Theta(q)+4qT_o(q), \label{S1-id2} \\
\Psi(q)T_e(q)&=\Theta(q)+2qT_o(q). \label{cor:companions}
\end{align}
\end{proposition}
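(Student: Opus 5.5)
The plan is to prove \eqref{S1-id1} and \eqref{cor:companions} directly and obtain \eqref{S1-id2} by substituting \eqref{cor:companions} into \eqref{S1-id1}: $\Psi T_e+2qT_o=\Theta+2qT_o+2qT_o=\Theta+4qT_o$. Equivalently, once \eqref{S1-id1} is proved, \eqref{S1-id2} and \eqref{cor:companions} are the same statement, so only two independent identities need proof.

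\emph{Step 1: identity \eqref{S1-id1}.} I would write
\[
S_1(q)=\qpoch{-q}{q}^2\sum_{n\ge0}\frac{q^n}{\qpochn{-q}{q}{n}^2},
\]
and split the sum according to the parity of $n$, with $n=2m$ and $n=2m+1$. For $n=2m$ I would pair the two factors $1/\qpochn{-q}{q}{2m}$ differently. One of them is expanded through the $q$-binomial theorem \eqref{q-binomial} (or Euler's expansion \eqref{Euler}) as a series in the tail variable. The other is rewritten using $\qpochn{-q}{q}{2m}\qpochn{q}{q}{2m}=\qpochn{q^2}{q^2}{2m}$, so that the denominators become base-$q^2$ objects $\qpochn{q^2}{q^2}{m}$ and $\qpochn{q}{q^2}{m+1}$. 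After interchanging the order of summation, the inner sums should be summable by \eqref{q-binomial} in base $q^2$, and the prefactor $\qpoch{q^2}{q^2}/\qpoch{q}{q^2}=\Psi(q)$ should appear via Euler's identity \eqref{Euler0}. The target is for the even part to produce $\Psi(q)T_e(q)$, with the factor $\qpochn{-1}{q}{2m}=2\qpochn{-q}{q}{2m-1}$ arising from combining the two $(-q)$-products. The odd part should produce $2qT_o(q)$, with $\qpochn{-q}{q}{2m}/\qpochn{q}{q^2}{m+1}$ as summand. If this bookkeeping resists, my fallback is Heine's transformation \eqref{Heine-1}, applied to each parity piece written as a ${}_2\phi_1$ in base $q^2$.

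\emph{Step 2: identity \eqref{cor:companions}.} Using Euler's identity, $\Psi(q)=(q^2;q^2)_\infty(-q;q)_\infty$. Writing $\qpochn{-1}{q}{2m}=2\qpochn{-q}{q}{2m-1}$ for $m\ge1$, the claim becomes
\[
\Psi(q)\Bigl(1+2\sum_{m\ge1}\frac{\qpochn{-q}{q}{2m-1}q^{2m}}{\qpochn{q^2}{q^2}{m}}\Bigr)
=\Theta(q)+2q\sum_{m\ge0}\frac{q^{2m}\qpochn{-q}{q}{2m}}{\qpochn{q}{q^2}{m+1}} .
\]
I would try to prove this by a telescoping or difference argument in $m$. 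The first thing to try is to write the partial sums of both sides in finite form, using $\qpochn{q}{q^2}{m+1}\qpochn{q^2}{q^2}{m}=\qpochn{q}{q}{2m+1}$. Then I would show that the difference of the $m$th terms equals $F_{m}-F_{m+1}$ for an explicit $F_m$ whose limit as $m\to\infty$ reduces to $\Theta(q)$. The limit should be identifiable through the Jacobi triple product \eqref{Jacobi-triple} or Gauss' sum \eqref{Gauss-sum}, where $\Theta$ is $\Psi$ with $q\mapsto -q$ up to sign twists. As a fallback, I would reduce both sides to Andrews' formula quoted in Remark~\ref{rem:parity-from-andrews}. That route needs an independent false-theta expression for $T_o$, derived from \eqref{falsetheta}, and would then yield \eqref{S1-id2} directly.

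\emph{Main obstacle.} I expect Step 2 to be the hardest part, because it must produce the genuinely theta-type series $\Theta(q)$ from eta-quotient-times-series data. Finding the right telescoping $F_m$ (or the right Heine parameters) is where the real work lies. Step 1 should be mechanical once the parity split is set up correctly.
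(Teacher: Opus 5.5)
\textbf{Step~1 has a genuine gap: you split on the wrong index, and the targets you set for the two pieces are false.} The even-$n$ part of $S_1$ is
$\sum_{m\ge 0}q^{2m}(-q^{2m+1};q)_\infty^2=1+2q+4q^2+6q^3+\cdots$.
Since $T_e(q)=1+2q^2+2q^3+4q^4+\cdots$, we have $\Psi(q)T_e(q)=1+q+2q^2+5q^3+\cdots$, so the two already differ at $q^1$. Likewise the odd-$n$ part is $q+3q^3+2q^4+\cdots$, not $2qT_o(q)=2q+2q^2+4q^3+\cdots$. Neither bookkeeping nor Heine applied to the parity pieces in $n$ can deliver \eqref{S1-id1} piece by piece.

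The parity split that works is in the index $k$ of an inner expansion. Write
\[
(-q^{n+1};q)_\infty^2=(q^{2n+2};q^2)_\infty\,\frac{(-q^{n+1};q)_\infty}{(q^{n+1};q)_\infty}
=\frac{(q^2;q^2)_\infty}{(q^2;q^2)_n}\sum_{k\ge 0}\frac{(-1;q)_k\,q^{(n+1)k}}{(q;q)_k}.
\]
Interchange the sums and sum over $n$ by the $q$-binomial theorem in base $q^2$ to get
\[
S_1(q)=(q^2;q^2)_\infty\sum_{k\ge 0}\frac{(-1;q)_k\,q^k}{(q;q)_k\,(q^{k+1};q^2)_\infty}.
\]
The terms with $k=2m$ are exactly $\Psi(q)$ times the $m$th summand of $T_e$. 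The terms with $k=2m+1$ are exactly $2q$ times the $m$th summand of $T_o$. This uses $(q;q)_{2m}=(q;q^2)_m(q^2;q^2)_m$, $(q;q)_{2m+1}=(q;q^2)_{m+1}(q^2;q^2)_m$ and $(-1;q)_{2m+1}=2(-q;q)_{2m}$.

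\textbf{Step~2 is not yet a proof.} You never identify an $F_m$. Your fallback shifts the whole difficulty into an unproved ``independent'' false-theta formula for $T_o$. Given $T_e=P/(q;q)_\infty$, such a formula is just a restatement of \eqref{cor:companions}. The efficient use of Andrews' formula is to combine it with \eqref{S1-id1} and $T_e=P/(q;q)_\infty$, so that $\Psi T_e=(-q;q)_\infty^2P$; but that presupposes \eqref{S1-id1}, which your Step~1 does not deliver.

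The missing idea is the sign-twisted companion $H(q)=\sum_{n\ge 0}q^n(q^{n+1};q)_\infty^2$. Because $(q^{n+1};q)_\infty^2=(q^{2n+2};q^2)_\infty(q^{n+1};q)_\infty/(-q^{n+1};q)_\infty$, the computation above goes through verbatim with an extra factor $(-1)^k$ in the $k$-sum. Hence $S_1-H=4qT_o$ and $S_1+H=2\Psi T_e$, and everything reduces to $H=\Theta$. That is a two-line computation: write $(q^{n+1};q)_\infty^2=(q;q)_\infty(q^{n+1};q)_\infty/(q;q)_n$, expand $(q^{n+1};q)_\infty$ by \eqref{Euler}, and sum over $n$ by \eqref{q-binomial}. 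This gives
\[
H(q)=(q;q)_\infty\sum_{r\ge 0}\frac{(-1)^r q^{r(r+1)/2}}{(q;q)_r\,(q^{r+1};q)_\infty}=\Theta(q).
\]
This yields \eqref{S1-id2} directly, and \eqref{cor:companions} follows by subtraction; no telescoping or theta-function limit is needed. Your observation that \eqref{S1-id2} and \eqref{cor:companions} are equivalent once \eqref{S1-id1} holds is correct.
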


Define
\[
P(q):=\sum_{n\ge 0} q^{n(3n+1)/2}(1-q^{2n+1}).
\]
The next proposition will be used in the proof of Theorem~\ref{thm:Te-partitions}.

\begin{proposition}\label{prop:Te-closed}
We have
\begin{equation}\label{Te-closed}
T_e(q)=\frac{P(q)}{\qpoch{q}{q}}.
\end{equation}
\end{proposition}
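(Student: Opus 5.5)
The plan is to recognize $T_e(q)$ as a ${}_2\phi_1$ series in base $q^2$, apply Heine's first transformation~\eqref{Heine-1}, and then identify the transformed series with the false theta evaluation~\eqref{falsetheta}.

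\emph{Step 1: rewrite $T_e(q)$ in base $q^2$.} Split the factors of $\qpochn{-1}{q}{2m}$ by the parity of their exponent:
\[
\qpochn{-1}{q}{2m}=\qpochn{-1}{q^2}{m}\,\qpochn{-q}{q^2}{m}.
\]
This gives
\[
T_e(q)=\sum_{m\ge0}\frac{\qpochn{-1}{q^2}{m}\qpochn{-q}{q^2}{m}}{\qpochn{q^2}{q^2}{m}}\,q^{2m}
={}_2\phi_1\!\left(\begin{matrix}-1,\,-q\\ 0\end{matrix};q^2,q^2\right).
\]

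\emph{Step 2: apply Heine.} Use~\eqref{Heine-1} with base $q^2$ and parameters $a=-1$, $b=-q$, $c=0$, $z=q^2$. Both $|z|<1$ and $|b|<1$ for $|q|<1$, so both sides converge. Since $c/b=0$ and $az=-q^2$, the prefactor is
\[
\frac{\qpoch{-q}{q^2}\qpoch{-q^2}{q^2}}{\qpoch{q^2}{q^2}}.
\]
The transformed series is
\[
{}_2\phi_1\!\left(\begin{matrix}0,\,q^2\\ -q^2\end{matrix};q^2,-q\right)
=\sum_{n\ge0}\frac{\qpochn{q^2}{q^2}{n}(-q)^n}{\qpochn{q^2}{q^2}{n}\qpochn{-q^2}{q^2}{n}}
=\sum_{n\ge0}\frac{(-q)^n}{\qpochn{-q^2}{q^2}{n}}.
\]

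\emph{Step 3: identify both pieces.} By~\eqref{falsetheta}, the transformed series equals $P(q)$. For the prefactor, note that
\[
\qpoch{-q}{q^2}\qpoch{-q^2}{q^2}=\qpoch{-q}{q}
\quad\text{and}\quad
\qpoch{q^2}{q^2}=\qpoch{q}{q}\qpoch{-q}{q}.
\]
Hence the prefactor collapses to $1/\qpoch{q}{q}$, which yields~\eqref{Te-closed}.

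There is no serious obstacle here. The only delicate points are the bookkeeping of the Heine parameters when $c=0$, and checking that the parameter $b=-q$ in the transformed series keeps it convergent, which it does since $|q|<1$.
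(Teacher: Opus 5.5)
Your proposal is correct and matches the paper's proof step for step. Both rewrite $T_e(q)$ as ${}_2\phi_1(-1,-q;0;q^2,q^2)$, apply Heine's transformation~\eqref{Heine-1} in base $q^2$ with $(a,b,c,z)=(-1,-q,0,q^2)$, and identify the transformed series via the false theta evaluation~\eqref{falsetheta}. Your explicit reduction of $\qpoch{-q}{q^2}\qpoch{-q^2}{q^2}/\qpoch{q^2}{q^2}$ to $1/\qpoch{q}{q}$ simply spells out a step the paper performs without comment.
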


Combining Proposition~\ref{prop:companions} with Proposition~\ref{prop:Te-closed}, we obtain
\begin{equation}\label{C-basic}
2qC(q)=\Psi(q)P(q)-(q;q)_\infty\Theta(q).
\end{equation}

\begin{remark}\label{rem:andrews-vd-proposition}
The first identity \eqref{S1-id1} is the direct even--odd decomposition of $S_1(q)$.
In conjunction with this decomposition, the remaining identities \eqref{S1-id2} and \eqref{cor:companions} can also be recovered from Andrews' identity for $v_d(q)$.
Since $S_1(q)=v_d(q)$, \cite[Theorem~1, (1.4)]{AndrewsConcaveConvex} gives
\begin{equation}\label{andrews-vd-formula}
S_1(q)=-\Theta(q)+2(-q;q)_\infty^2P(q).
\end{equation}
On the other hand, Proposition~\ref{prop:Te-closed}, Euler's identity \eqref{Euler0}, and
\[
(q;q)_\infty=(q;q^2)_\infty(q^2;q^2)_\infty
\]
give
\[
(-q;q)_\infty^2P(q)
=\frac{P(q)}{(q;q^2)_\infty^2}
=\frac{(q^2;q^2)_\infty}{(q;q^2)_\infty}\frac{P(q)}{(q;q)_\infty}
=\Psi(q)T_e(q).
\]
Thus \eqref{andrews-vd-formula} is equivalent to
\[
S_1(q)=2\Psi(q)T_e(q)-\Theta(q).
\]
Comparing this with the direct even--odd decomposition \eqref{S1-id1} gives
\[
\Psi(q)T_e(q)=\Theta(q)+2qT_o(q),
\]
which is \eqref{cor:companions}; substituting this back into \eqref{S1-id1} gives
\[
S_1(q)=\Theta(q)+4qT_o(q),
\]
which is \eqref{S1-id2}.
This provides an alternative derivation of these two companion identities from Andrews' formula, while the proof below remains self-contained.
\end{remark}

The next proposition gives a parity-split representation of $C(q)$.
\begin{proposition}\label{prop:C-structure}
The eta-normalized series $C(q)$ has the representation
\begin{align}
C(q)
&=
\left(\sum_{n\ge 0} q^{6n^2+n}(1-q^{4n+1})\right)
\left(\sum_{n\ge 0} q^{2n^2+3n}\right) \notag\\
&\quad+
\left(\sum_{n\ge 0} q^{6n^2+7n+1}(1-q^{4n+3})\right)
\left(\sum_{n\ge 0} q^{2n^2+n}\right). \label{C-parity-2}
\end{align}
\end{proposition}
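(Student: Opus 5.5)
The plan is to start from \eqref{C-basic},
\[
2qC(q)=\Psi(q)P(q)-(q;q)_\infty\Theta(q),
\]
and split each of $P$, $(q;q)_\infty$, $\Psi$ and $\Theta$ according to the parity of the summation index. On the pentagonal side, index $n=2k$ in $P(q)=\sum_{n\ge0}q^{n(3n+1)/2}(1-q^{2n+1})$ gives
\[
P_0(q):=\sum_{k\ge0}q^{6k^2+k}(1-q^{4k+1}).
\]
Index $n=2k+1$ gives
\[
P_1(q):=\sum_{k\ge0}q^{6k^2+7k+2}(1-q^{4k+3}).
\]
By Euler's pentagonal theorem \eqref{pentagonal}, $(q;q)_\infty=P_0(q)-P_1(q)$. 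This sign pattern is what produces the relevant combination in \eqref{C-parity-2}. Indeed $q^{6k^2+7k+1}=q^{-1}\cdot q^{6k^2+7k+2}$, so the second pentagonal factor in \eqref{C-parity-2} is $q^{-1}P_1(q)$.

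On the triangular side, split $\Psi(q)=\sum_{r\ge0}q^{r(r+1)/2}$ and $\Theta(q)$ by $r=2n$ and $r=2n+1$. This gives
\[
\Psi=E+qO,\qquad \Theta=E-qO,
\]
where
\[
E(q):=\sum_{n\ge0}q^{2n^2+n},\qquad O(q):=\sum_{n\ge0}q^{2n^2+3n}.
\]
Here we use $\tfrac{(2n+1)(2n+2)}{2}=2n^2+3n+1$.

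Substituting, we get
\[
\Psi P-(q;q)_\infty\Theta=(E+qO)(P_0+P_1)-(E-qO)(P_0-P_1)=2EP_1+2qOP_0.
\]
Dividing by $2q$ yields
\[
C(q)=O(q)P_0(q)+E(q)\,q^{-1}P_1(q),
\]
which is exactly \eqref{C-parity-2}.

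So the proof is a purely formal bookkeeping argument, and it has only three steps:
\begin{enumerate}
\item verify the exponent arithmetic in the parity splits;
\item check the identities $\Psi=E+qO$ and $\Theta=E-qO$, which hold since $(-1)^r$ is $+1$ for even $r$ and $-1$ for odd $r$;
\item expand the product, where the cross terms $EP_0$ and $qOP_1$ cancel.
\end{enumerate}
The only place demanding care is the correct identification $(q;q)_\infty=P_0-P_1$ versus $P=P_0+P_1$. This is also the step where the sign pattern making the cancellation work must match. I would double-check it against the low-order expansion $C(q)=1+q^5-q^6+\cdots$ from Remark~\ref{rem:A344650-comparison}.
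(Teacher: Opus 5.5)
Your proposal is correct and is essentially the paper's proof. Both start from \eqref{C-basic} and split $P$, $(q;q)_\infty$, $\Psi$, $\Theta$ by the parity of the summation index; your $P_0,P_1,E,qO$ are exactly the paper's $A_{\mathrm e},A_{\mathrm o},B_{\mathrm e},B_{\mathrm o}$, the cross terms cancel to leave $2A_{\mathrm e}B_{\mathrm o}+2A_{\mathrm o}B_{\mathrm e}$, and division by $2q$ gives \eqref{C-parity-2}, with your exponent bookkeeping matching \eqref{C-parity-1} term for term.
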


\begin{remark}\label{rem:x2-plus-3y2}
The quadratic form $x^2+3y^2$ that appears in Theorem~\ref{thm:C-local} is naturally attached to the theta series
\[
\sum_{x,y\in\mathbb Z} q^{x^2+3y^2},
\]
a classical modular form of weight $1$. Thus Theorem~\ref{thm:C-local} places the support of $C(q)$ in a classical modular-form setting without asserting modularity for $C(q)$ itself.
\end{remark}
\section{Proof of Proposition~\ref{prop:companions} and Theorem~\ref{thm:mod4}}\label{sec:proof-thm1}
We first need a lemma.
\begin{lemma}\label{lem:H-theta}
Let
\[
H(q):=\sum_{n\ge 0} q^n\,\qpoch{q^{n+1}}{q}^{2}.
\]
Then we have
\[
H(q)=\Theta(q).
\]
\end{lemma}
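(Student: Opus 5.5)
The plan is to expand one of the two factors $\qpoch{q^{n+1}}{q}$ by Euler's expansion~\eqref{Euler}, interchange the order of summation, and then sum over $n$ in closed form using the $q$-binomial theorem~\eqref{q-binomial} with $a=0$. The quadratic exponent from Euler's expansion should combine with the factor $q^{k}$ produced by $z=q^{n+1}$ to give exactly the triangular exponent $k(k+1)/2$. Everything else should then cancel.

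\emph{Step 1.} Apply~\eqref{Euler} with $z=q^{n+1}$:
\[
\qpoch{q^{n+1}}{q}=\sum_{k\ge 0}\frac{(-1)^k q^{k(k-1)/2}q^{(n+1)k}}{\qpochn{q}{q}{k}}
=\sum_{k\ge 0}\frac{(-1)^k q^{k(k+1)/2}\,q^{nk}}{\qpochn{q}{q}{k}}.
\]
Substituting this for one copy of $\qpoch{q^{n+1}}{q}$ in $H(q)$ and interchanging the sums gives
\[
H(q)=\sum_{k\ge 0}\frac{(-1)^k q^{k(k+1)/2}}{\qpochn{q}{q}{k}}\sum_{n\ge 0} q^{n(k+1)}\qpoch{q^{n+1}}{q}.
\]
The interchange is legitimate as formal power series, because the $(n,k)$ term is $O(q^{n+k(k+1)/2})$. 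Hence only finitely many pairs $(n,k)$ contribute to each coefficient.

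\emph{Step 2.} Write $\qpoch{q^{n+1}}{q}=\qpoch{q}{q}/\qpochn{q}{q}{n}$. The inner sum then becomes $\qpoch{q}{q}\sum_{n\ge0} z^n/\qpochn{q}{q}{n}$ with $z=q^{k+1}$. By~\eqref{q-binomial} with $a=0$, this equals
\[
\frac{\qpoch{q}{q}}{\qpoch{q^{k+1}}{q}}=\qpochn{q}{q}{k}.
\]
This cancels the denominator $\qpochn{q}{q}{k}$ exactly. We are left with $H(q)=\sum_{k\ge0}(-1)^kq^{k(k+1)/2}=\Theta(q)$, as in~\eqref{Theta-def}.

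There is no serious obstacle. The only points needing care are the exponent bookkeeping in Step~1, namely $k(k-1)/2+k=k(k+1)/2$, and the justification of the interchange of summation. As a fallback, one could write $H(q)=\qpoch{q}{q}^2\sum_{n}q^n/\qpochn{q}{q}{n}^2$ and treat the sum as a limiting case of Heine's transformation~\eqref{Heine-1}. However, the direct Euler/$q$-binomial route avoids the degenerate parameters $a=b=0$, so I would use it.
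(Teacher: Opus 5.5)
Your proof is correct and is essentially the paper's argument: the paper also writes one factor as $(q;q)_\infty/(q;q)_n$, Euler-expands the other factor $(q^{n+1};q)_\infty$, and sums over $n$ with the $q$-binomial theorem at $a=0$. The only cosmetic difference is that the paper carries a free variable $z$ in place of $q^n$ (the series $G(z)$) and sets $z=q$ at the end, whereas you specialize from the start, so the cancellation $(q;q)_\infty/(q^{k+1};q)_\infty=(q;q)_k$ appears directly.
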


\begin{proof}
Consider
\[
G(z):=\sum_{n\ge 0} z^n\,\qpoch{q^{n+1}}{q}^{2}.
\]
Then
\begin{equation}\label{H-help1}
G(q) = H(q).
\end{equation}
By the $q$-binomial theorem~\eqref{q-binomial}
applied with $a=0$ and Euler's expansion~\eqref{Euler} with $z=q^{n+1}$, we obtain
\begin{align*}
G(z)
&=\qpoch{q}{q}\sum_{n\ge 0}\frac{z^n}{\qpochn{q}{q}{n}}\,\qpoch{q^{n+1}}{q} \\
&=\qpoch{q}{q}\sum_{r\ge 0}\frac{(-1)^r q^{r(r+1)/2}}{\qpochn{q}{q}{r}}
   \sum_{n\ge 0}\frac{(zq^r)^n}{\qpochn{q}{q}{n}} \\
&=\frac{\qpoch{q}{q}}{\qpoch{z}{q}}
  \sum_{r\ge 0}\frac{(-1)^r q^{r(r+1)/2}\qpochn{z}{q}{r}}{\qpochn{q}{q}{r}}.
\end{align*}
Now letting $z=q$ in the previous identity and using~\eqref{H-help1}, we obtain
\[
H(q)=\sum_{r\ge 0}(-1)^r q^{r(r+1)/2}=\Theta(q).
\]
This completes the proof.
\end{proof}

\begin{proof}[Proof of Proposition~\ref{prop:companions}]
We first prove~\eqref{S1-id1}. From
\[
\qpoch{-q^{n+1}}{q}^{2}
=\qpoch{q^{2n+2}}{q^2}
\frac{\qpoch{-q^{n+1}}{q}}{\qpoch{q^{n+1}}{q}},
\]
and~\eqref{q-binomial} with $z=q^{n+1}$, we get
\begin{align*}
S_1(q) &=\sum_{n\ge 0}q^n\qpoch{q^{2n+2}}{q^2}
\frac{\qpoch{-q^{n+1}}{q}}{\qpoch{q^{n+1}}{q}} \\
&=\qpoch{q^2}{q^2}\sum_{n\ge 0}\frac{q^n}{\qpochn{q^2}{q^2}{n}}
   \sum_{k\ge 0}\frac{\qpochn{-1}{q}{k}q^{(n+1)k}}{\qpochn{q}{q}{k}} \\
&=\qpoch{q^2}{q^2}\sum_{k\ge 0}\frac{\qpochn{-1}{q}{k}q^k}{\qpochn{q}{q}{k}}
   \sum_{n\ge 0}\frac{q^{n(k+1)}}{\qpochn{q^2}{q^2}{n}} \\
&=\qpoch{q^2}{q^2}\sum_{k\ge 0}
\frac{\qpochn{-1}{q}{k}q^k}{\qpochn{q}{q}{k}\qpoch{q^{k+1}}{q^2}},
\end{align*}
where in the last step we used~\eqref{q-binomial} with $q\to q^2$ and $z=q^{k+1}$.

Separate the sum into even and odd values of $k$.
For $k=2m$, we have
\[
\qpochn{q}{q}{2m}=\qpochn{q}{q^2}{m}\qpochn{q^2}{q^2}{m}
\]
and
\[
\frac{\qpoch{q^2}{q^2}}{\qpoch{q^{2m+1}}{q^2}}
=\frac{\qpoch{q^2}{q^2}}{\qpoch{q}{q^2}}\qpochn{q}{q^2}{m}
=\Psi(q)\qpochn{q}{q^2}{m}.
\]
Hence the even part is
\[
\Psi(q)\sum_{m\ge 0}
\frac{\qpochn{-1}{q}{2m}q^{2m}}{\qpochn{q^2}{q^2}{m}}
=\Psi(q)T_e(q).
\]
For $k=2m+1$, we use
\[
\qpochn{-1}{q}{2m+1}=2\qpochn{-q}{q}{2m},
\qquad
\qpochn{q}{q}{2m+1}=\qpochn{q}{q^2}{m+1}\qpochn{q^2}{q^2}{m},
\]
and
\[
\frac{\qpoch{q^2}{q^2}}{\qpoch{q^{2m+2}}{q^2}}=\qpochn{q^2}{q^2}{m}.
\]
Therefore the odd part is
\[
2q\sum_{m\ge 0}q^{2m}
\frac{\qpochn{-q}{q}{2m}}{\qpochn{q}{q^2}{m+1}}
=2qT_o(q).
\]
Combining the even and odd parts gives
\[
S_1(q)=\Psi(q)T_e(q)+2qT_o(q),
\]
which is~\eqref{S1-id1}.

To prove~\eqref{S1-id2}, apply the same calculation to
\[
H(q):=\sum_{n\ge 0}q^n\qpoch{q^{n+1}}{q}^{2}.
\]
Since
\[
\qpoch{q^{n+1}}{q}^{2}
=\qpoch{q^{2n+2}}{q^2}
\frac{\qpoch{q^{n+1}}{q}}{\qpoch{-q^{n+1}}{q}},
\]
and
\[
\frac{\qpoch{z}{q}}{\qpoch{-z}{q}}
=\sum_{k\ge 0}\frac{\qpochn{-1}{q}{k}}{\qpochn{q}{q}{k}}(-z)^k
=\sum_{k\ge 0}\frac{(-1)^k\qpochn{-1}{q}{k}}{\qpochn{q}{q}{k}}z^k,
\]
we obtain
\[
H(q)=\qpoch{q^2}{q^2}
\sum_{k\ge 0}
\frac{(-1)^k\qpochn{-1}{q}{k}q^k}{\qpochn{q}{q}{k}\qpoch{q^{k+1}}{q^2}}.
\]
Subtract this identity from the corresponding formula for $S_1(q)$. The terms with even $k$ cancel, and only the odd terms $k=2m+1$ remain:
\begin{align*}
S_1(q)-H(q)
&=2\qpoch{q^2}{q^2}
  \sum_{m\ge 0}
\frac{\qpochn{-1}{q}{2m+1}q^{2m+1}}{\qpochn{q}{q}{2m+1}\qpoch{q^{2m+2}}{q^2}} \\
&=4q\sum_{m\ge 0}q^{2m}
\frac{\qpochn{-q}{q}{2m}}{\qpochn{q}{q^2}{m+1}} \\
&=4qT_o(q).
\end{align*}
By Lemma~\ref{lem:H-theta}, $H(q)=\Theta(q)$, and hence
\[
S_1(q)=\Theta(q)+4qT_o(q).
\]
This proves~\eqref{S1-id2}.

Finally, subtracting the two identities~\eqref{S1-id1} and~\eqref{S1-id2} yields~\eqref{cor:companions}.
\end{proof}

\begin{proof}[Proof of Theorem~\ref{thm:mod4}]
Reducing~\eqref{S1-id2} modulo $4$ gives
\[
S_1(q)\equiv \Theta(q)=\sum_{r\ge 0}(-1)^r q^{r(r+1)/2}\pmod{4}.
\]
Comparing coefficients yields the stated formula for $s_1(n)$.
\end{proof}
\section{Proof of Theorem~\ref{thm:C-recursions}}\label{sec:proof-C-recursions}
We need two lemmas.

Throughout, if $F(q)=\sum_{n\ge 0} f(n)q^n$ and $r\in\{0,1,2,3,4\}$, write
\[
\bigl(F(q)\bigr)^{(r)}:=\sum_{\substack{n\ge 0\\ n\equiv r\pmod{5}}} f(n)q^n.
\]
for the subseries of $F(q)$ supported on exponents congruent to $r\pmod{5}$.

\begin{lemma}\label{lem:five-parts}
The following residue extractions hold:
\[
\bigl( (q;q)_\infty \bigr)^{(1)}=-q(q^{25};q^{25})_\infty,
\]
\[
\bigl(\Psi(q)\bigr)^{(3)}=q^3\Psi(q^{25}),
\qquad
\bigl(\Theta(q)\bigr)^{(3)}=q^3\Theta(q^{25}),
\]
and
\[
\bigl(P(q) \bigr)^{(1)}=-qP(q^{25}).
\]
\end{lemma}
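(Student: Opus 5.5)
Each of the four extractions reduces to a statement about which exponents of a single-sum theta-type series lie in a given residue class modulo $5$. In every case we complete the square and use the fact that $-1$ and $1$ are the only square roots of $1$ modulo $5$.

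\emph{The series $(q;q)_\infty$.} By the pentagonal theorem \eqref{pentagonal},
\[
(q;q)_\infty=\sum_{k\in\mathbb Z}(-1)^kq^{k(3k+1)/2}.
\]
Since $24\cdot\frac{k(3k+1)}{2}+1=(6k+1)^2$, we have
\[
\frac{k(3k+1)}{2}\equiv 1\pmod 5 \iff (6k+1)^2\equiv 25\equiv 0\pmod{5}\iff k\equiv 4\pmod 5 .
\]
Substitute $k=5j+4$, or better $k=5j-1$ for $j\in\mathbb Z$. A direct computation gives
\[
\frac{k(3k+1)}{2}=\frac{(5j-1)(15j-2)}{2}=25\cdot\frac{j(3j-1)}{2}+1,
\]
with sign $(-1)^{5j-1}=-(-1)^j$. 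Replacing $j\to -j$ yields $-q\sum_j(-1)^jq^{25j(3j+1)/2}=-q(q^{25};q^{25})_\infty$.

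\emph{The series $\Psi$ and $\Theta$.} For $\Psi(q)=\sum_{n\ge0}q^{n(n+1)/2}$ we use $8\cdot\frac{n(n+1)}2+1=(2n+1)^2$. The condition that the exponent is $\equiv 3\pmod 5$ becomes $(2n+1)^2\equiv 25\equiv0$, so $2n+1=5(2m+1)$, that is, $n=5m+2$ with $m\ge0$. Then
\[
\frac{n(n+1)}{2}=\frac{(5m+2)(5m+3)}{2}=25\cdot\frac{m(m+1)}{2}+3,
\]
which gives $q^3\Psi(q^{25})$. For $\Theta$ the same terms appear, now with sign $(-1)^{5m+2}=(-1)^m$, so we get $q^3\Theta(q^{25})$.

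\emph{The series $P$; main obstacle.} This is the one case needing care, because $P(q)=\sum_{n\ge0}q^{n(3n+1)/2}(1-q^{2n+1})$ is a false theta series and the sum runs over $n\ge0$ only. The plan is to write
\[
P(q)=\sum_{k\in\mathbb Z}\operatorname{sgn}^*(k)\,q^{k(3k+1)/2},
\]
where the $+$ terms come from $k=n\ge0$, and the $-$ terms come from $k=-n-1\le -1$, since $n(3n+1)/2+2n+1=(n+1)(3n+2)/2$. So $\operatorname{sgn}^*(k)=+1$ for $k\ge0$ and $-1$ for $k<0$. As before, the exponent is $\equiv1$ exactly when $k=5j-1$, with exponent $25j(3j-1)/2+1$. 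The sign is $+1$ iff $5j-1\ge0$ iff $j\ge1$, and $-1$ iff $j\le0$. The check to carry out carefully is that this sign rule matches the original one under $j\to -j$, which gives $-1$ for $j\ge0$ and $+1$ for $j\le -1$. Multiplying by $-1$ recovers exactly $\operatorname{sgn}^*(j)$ with exponent $25j(3j+1)/2$. Hence the extraction equals $-qP(q^{25})$. No sign is lost at the boundary $j=0$, because $k=-1<0$ and this term maps to $j=0\ge 0$ after negation.
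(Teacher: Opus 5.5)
Your proof is correct and follows the paper's route: expand each series as a (false) theta series, find which summation indices give the required residue class modulo $5$, substitute that progression, and recognize the same series at $q^{25}$. The differences are cosmetic. You justify the residue conditions by completing the square; what you actually use there is that $5\mid x^2$ forces $5\mid x$, not the square roots of $1$ mentioned in your opening sentence. For $P$ you pass to the bilateral form with sign $\operatorname{sgn}^*(k)$ instead of treating the two one-sided sums ($n\equiv 4$ and $n\equiv 0 \pmod 5$) separately, which makes the swap between the two pieces of $P(q^{25})$ more transparent than in the paper.
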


\begin{proof}
By Euler's pentagonal theorem in bilateral form,
\[
(q;q)_\infty=\sum_{m\in\mathbb Z}(-1)^m q^{m(3m-1)/2}.
\]
The exponents congruent to $1\pmod{5}$ occur exactly when $m\equiv 1\pmod{5}$. Writing $m=5k+1$ gives
\[
\bigl((q;q)_\infty\bigr)^{(1)}=-q\sum_{k\in\mathbb Z}(-1)^k q^{25k(3k+1)/2}.
\]
Replacing $k$ by $-k$ and applying Euler's pentagonal theorem again yields the first identity.

Since $n(n+1)/2\equiv 0,1,3\pmod{5}$, and residue $3$ occurs only when $n\equiv 2\pmod{5}$, we obtain with the help of~\eqref{Gauss-sum}
\[
\bigl(\Psi(q)\bigr)^{(3)}=\sum_{k\ge 0} q^{(5k+2)(5k+3)/2}=q^3\Psi(q^{25}).
\]
The same residue condition together with the factor $(-1)^n$ gives
\[
\bigl(\Theta(q)\bigr)^{(3)}=\sum_{k\ge 0} (-1)^{5k+2}q^{(5k+2)(5k+3)/2}=q^3\Theta(q^{25}).
\]

Finally,
\[
P(q)=\sum_{n\ge 0}q^{n(3n+1)/2}-\sum_{n\ge 0}q^{n(3n+1)/2+2n+1}.
\]
The residue class $1\pmod{5}$ occurs in the first sum only when $n\equiv 4\pmod{5}$, and in the second sum only when $n\equiv 0\pmod{5}$. Writing $n=5k+4$ and $n=5k$ gives $\bigl(P\bigr)^{(1)}(q)=-qP(q^{25})$.
\end{proof}
The following lemma states that the coefficients of $C(q)$ satisfy the exact quintic self-similarity.
\begin{lemma}\label{lem:C-quintic}
We have
\begin{equation}\label{C-quintic}
\sum_{n\ge 0} c(5n+3)q^n=-q^5C(q^5).
\end{equation}
\end{lemma}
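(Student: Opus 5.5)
We work from the identity \eqref{C-basic},
\[
2qC(q)=\Psi(q)P(q)-(q;q)_\infty\Theta(q),
\]
and extract from both sides the terms whose exponents are $\equiv 4\pmod 5$. On the left these are $2q\sum_n c(5n+3)q^{5n+3}$, so the lemma is equivalent to
\[
\bigl(\Psi(q)P(q)-(q;q)_\infty\Theta(q)\bigr)^{(4)}=-2q^{29}C(q^{25}).
\]
By \eqref{C-basic} with $q\to q^{25}$, this right-hand side equals $-q^{4}\bigl(\Psi(q^{25})P(q^{25})-(q^{25};q^{25})_\infty\Theta(q^{25})\bigr)$. By Lemma~\ref{lem:five-parts}, that is exactly
\[
\bigl(\Psi\bigr)^{(3)}\bigl(P\bigr)^{(1)}-\bigl((q;q)_\infty\bigr)^{(1)}\bigl(\Theta\bigr)^{(3)}.
\]
So the task reduces to showing that, in each product, the residue pair $(3,1)$ gives the \emph{only} contribution to residue $4$, or at least that all other contributions cancel between the two products.

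First I record the supports of the four factors. The exponents of $\Psi$ and $\Theta$ are triangular numbers, which lie in $\{0,1,3\}$ mod $5$. The exponents of $(q;q)_\infty$ are generalized pentagonal numbers $m(3m-1)/2$, which lie in $\{0,1,2\}$ mod $5$. The exponents of $P$ are $n(3n+1)/2$ and $n(3n+1)/2+2n+1=(n+1)(3n+2)/2$, which are again pentagonal, so $P$ is also supported on $\{0,1,2\}$ mod $5$.

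Next I list all pairs $a+b\equiv 4$ with $a\in\{0,1,3\}$ and $b\in\{0,1,2\}$. These are $(3,1)$, since $0+4$ and $1+3$ are impossible, and also $(a,b)=(4,0)$ is excluded while $(2,2)$ is not allowed. Checking all cases: $a=0$ needs $b=4$, no; $a=1$ needs $b=3$, no; $a=3$ needs $b=1$, yes. Hence $(3,1)$ is the unique pair, in both products.

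Therefore
\[
(\Psi P)^{(4)}=\Psi^{(3)}P^{(1)}=-q^4\Psi(q^{25})P(q^{25}),
\]
\[
((q;q)_\infty\Theta)^{(4)}=-q^4(q^{25};q^{25})_\infty\Theta(q^{25}).
\]
Subtracting and using \eqref{C-basic} at $q^{25}$ gives
\[
2q\sum_n c(5n+3)q^{5n+3}=-q^4\cdot 2q^{25}C(q^{25}).
\]
Dividing by $2q^4$ and replacing $q^5$ by $q$ yields \eqref{C-quintic}.

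The only step needing care is the support bookkeeping, namely that $P$, and not just $(q;q)_\infty$, has pentagonal exponents. This follows from the identity $n(3n+1)/2+2n+1=(n+1)(3(n+1)-1)/2$, so I expect no real obstacle.
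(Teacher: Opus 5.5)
Your proof is correct and is essentially the paper's argument: extract the residue-$4$ part of \eqref{C-basic}, use the supports $\{0,1,3\}$ and $\{0,1,2\}$ modulo $5$ to isolate the single residue pair $(3,1)$ in each product, apply Lemma~\ref{lem:five-parts}, and recognize \eqref{C-basic} with $q$ replaced by $q^{25}$. Your check that the exponents of $P$ are generalized pentagonal numbers, via $n(3n+1)/2+2n+1=(n+1)(3n+2)/2$, justifies a support claim that the paper only asserts.
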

\begin{proof}
Take the residue-$4$ part of~\eqref{C-basic}. Since $\Psi(q)$ only has residues $0,1,3\pmod{5}$ and $P(q)$ only has residues $0,1,2\pmod{5}$, the only contribution to residue $4$ in $\Psi(q)P(q)$ comes from the pair $(3,1)$. Likewise, the only contribution to residue $4$ in $(q;q)_\infty\Theta(q)$ comes from the pair $(1,3)$. Lemma~\ref{lem:five-parts} therefore gives
\begin{align*}
\bigl(2qC(q)\bigr)^{(4)}
&=\bigl(\Psi(q)\bigr)^{(3)} \bigl(P(q)\bigr)^{(1)}-\bigl((q;q)_\infty\bigr)^{(1)} \bigl(\Theta(q)\bigr)^{(3)} \\
&=q^3\Psi(q^{25})\cdot\bigl(-qP(q^{25})\bigr)
   -\bigl(-q(q^{25};q^{25})_\infty\bigr)\cdot q^3\Theta(q^{25}) \\
&=q^4\Bigl((q^{25};q^{25})_\infty\Theta(q^{25})-\Psi(q^{25})P(q^{25})\Bigr).
\end{align*}
Now apply~\eqref{C-basic} with $q$ replaced by $q^{25}$:
\[
2q^{25}C(q^{25})=\Psi(q^{25})P(q^{25})-(q^{25};q^{25})_\infty\Theta(q^{25}).
\]
Therefore
\[
\bigl(2qC(q) \bigr)^{(4)}=-2q^{29}C(q^{25}).
\]
On the other hand,
\[
2qC(q)=2\sum_{m\ge 0}c(m)q^{m+1},
\]
so its $4$-part is
\[
\bigl(2qC(q) \bigr)^{(4)}=2\sum_{n\ge 0}c(5n+3)q^{5n+4}.
\]
Comparing the last two identities, dividing by $2q^4$, and replacing $q^5$ by $q$ gives the claim.
\end{proof}
We are now in a position to establish Theorem~\ref{thm:C-recursions}.

\begin{proof}[Proof of Theorem~\ref{thm:C-recursions}]
By~\eqref{C-quintic}, the right-hand side is supported on positive exponents divisible by $5$. Hence the coefficient of $q^n$ on the left vanishes whenever $n\not\equiv 0\pmod{5}$, which is equivalent to
\[
c(25n+8)=c(25n+13)=c(25n+18)=c(25n+23)=0
\qquad(n\ge 0).
\]
If $n\ge 1$, then comparing coefficients of $q^{5n}$ in~\eqref{C-quintic} yields
\[
c(25n+3)=-c(n-1).
\]
Replacing $n$ by $n+1$ gives the equivalent form $c(25n+28)=-c(n)$ for $n\ge 0$.

Finally, Proposition~\ref{prop:C-structure} shows that $c(0)=1$ and $c(28)=-1$. Iterating the relation $c(25n+28)=-c(n)$ therefore produces infinitely many positive values and infinitely many negative values, so the signs change infinitely often.
\end{proof}
\section{Proof of Proposition~\ref{prop:Te-closed} and Proposition~\ref{prop:C-structure}}\label{sec:proof-Te-C}
\begin{proof}[Proof of Proposition~\ref{prop:Te-closed}]
Write
\begin{align*}
T_e(q) &= \sum_{n\geq 0}\fr{(-1;q)_{2n}}{(q^2;q^2)_n} q^{2n} \\
&=\sum_{n\geq 0}\fr{(-1;q^2)_n (-q;q^2)_n}{(q^2;q^2)_n} q^{2n} \\
&= {}_2\phi_1\!\left(\begin{matrix}-1,-q\\ 0\end{matrix};q^2,q^2\right).
\end{align*}
Then~\eqref{Heine-1}, with $q\to q^2$ and $(a,b,c,z)=(-1,-q,0,q^2)$, gives
\[
T_e(q)=\frac{\qpoch{-q,-q^2}{q^2}}{\qpoch{q^2}{q^2}}
{}_2\phi_1\!\left(\begin{matrix}0,q^2\\ -q^2\end{matrix};q^2,-q\right)
=\frac{1}{\qpoch{q}{q}}\sum_{n\ge 0}\frac{(-q)^n}{\qpochn{-q^2}{q^2}{n}}.
\]
Now combine the previous identity with~\eqref{falsetheta} to deduce that
\begin{align*}
T_e(q)&=\frac{1}{\qpoch{q}{q}} \sum_{n\ge 0} q^{n(3n+1)/2}(1-q^{2n+1}) \\
&=\frac{P(q)}{\qpoch{q}{q}},
\end{align*}
which proves Proposition~\ref{prop:Te-closed}.
\end{proof}

\begin{proof}[Proof of Proposition~\ref{prop:C-structure}]
Set
\[
A_n:=q^{n(3n+1)/2}(1-q^{2n+1}),
\qquad
B_n:=q^{n(n+1)/2}.
\]
Then
\[
(q;q)_\infty=\sum_{n\ge 0}(-1)^nA_n,
\qquad
P(q)=\sum_{n\ge 0}A_n,
\]
\[
\Theta(q)=\sum_{n\ge 0}(-1)^nB_n,
\qquad
\Psi(q)=\sum_{n\ge 0}B_n.
\]
Therefore \eqref{C-basic} can be rewritten as
\[
2qC(q)
= -\left(\sum_{n\ge 0}(-1)^nA_n\right)\left(\sum_{n\ge 0}(-1)^nB_n\right)
  +\left(\sum_{n\ge 0}A_n\right)\left(\sum_{n\ge 0}B_n\right).
\]
Now write
\[
A_{\mathrm e}:=\sum_{n\ge 0}A_{2n},\quad A_{\mathrm o}:=\sum_{n\ge 0}A_{2n+1},
\qquad
B_{\mathrm e}:=\sum_{n\ge 0}B_{2n},\quad B_{\mathrm o}:=\sum_{n\ge 0}B_{2n+1}.
\]
Then
\[
\begin{aligned}
2qC(q)
&=-(A_{\mathrm e}-A_{\mathrm o})(B_{\mathrm e}-B_{\mathrm o})
  +(A_{\mathrm e}+A_{\mathrm o})(B_{\mathrm e}+B_{\mathrm o})\\
&=2A_{\mathrm e}B_{\mathrm o}+2A_{\mathrm o}B_{\mathrm e}.
\end{aligned}
\]
Finally,
\[
A_{2n}=q^{n(6n+1)}(1-q^{4n+1}),
\qquad
A_{2n+1}=q^{(2n+1)(3n+2)}(1-q^{4n+3}),
\]
and
\[
B_{2n}=q^{n(2n+1)},
\qquad
B_{2n+1}=q^{(n+1)(2n+1)}.
\]
Substituting these formulas gives
\begin{align}
qC(q)
&=
\left(\sum_{n\ge 0} q^{n(6n+1)}(1-q^{4n+1})\right)
\left(\sum_{n\ge 0} q^{(n+1)(2n+1)}\right) \notag\\
&\quad+
\left(\sum_{n\ge 0} q^{(2n+1)(3n+2)}(1-q^{4n+3})\right)
\left(\sum_{n\ge 0} q^{n(2n+1)}\right). \label{C-parity-1}
\end{align}

Hence dividing~\eqref{C-parity-1} by $q$ gives~\eqref{C-parity-2}.
\end{proof}
\section{Proof of Theorem~\ref{thm:C-local} and Corollary~\ref{cor:C-local-progressions}}\label{sec:proof-C-local}

\begin{proof}[Proof of Theorem~\ref{thm:C-local}]
By Proposition~\ref{prop:C-structure}, every exponent $n$ with $c(n)\ne 0$ arises from one of the four shapes
\begin{align*}
n&=6u^2+u+2v^2+3v, \\
n&=6u^2+5u+1+2v^2+3v, \\
n&=6u^2+7u+1+2v^2+v, \\
n&=6u^2+11u+4+2v^2+v,
\end{align*}
for some $u,v\ge 0$. Equivalently,
\begin{align*}
24n+28&=(12u+1)^2+3(4v+3)^2, \\
24n+28&=(12u+5)^2+3(4v+3)^2, \\
24n+28&=(12u+7)^2+3(4v+1)^2, \\
24n+28&=(12u+11)^2+3(4v+1)^2.
\end{align*}
Hence $24n+28$ is represented by the binary quadratic form $x^2+3y^2$.

Now let $p\equiv 2\pmod{3}$ be an odd prime and suppose that
\[
24n+28=x^2+3y^2.
\]
If $p\mid x^2+3y^2$, then modulo $p$ we have $x^2\equiv -3y^2$. Since
\[
\left(\frac{-3}{p}\right)=-1
\]
for every odd prime $p\equiv 2\pmod{3}$, this forces $p\mid x$ and $p\mid y$, and hence $p^2\mid x^2+3y^2$.
Therefore every odd prime $p\equiv 2\pmod{3}$ occurs to even exponent in $24n+28=4(6n+7)$. It follows that if such a prime $p$ divides $6n+7$ to odd order, then $c(n)=0$.
\end{proof}

\begin{proof}[Proof of Corollary~\ref{cor:C-local-progressions}]
Since $p$ is an odd prime different from $3$, the congruence
\[
6r+7\equiv 0\pmod{p}
\]
has a unique solution modulo $p$, namely $r\equiv r_p\pmod{p}$. Among its $p$ lifts modulo $p^2$, exactly one also satisfies
\[
6r+7\equiv 0\pmod{p^2}.
\]
For every other lift $r$, we have $p\mid 6r+7$ but $p^2\nmid 6r+7$, so $p$ divides
\[
6(p^2n+r)+7
\]
to odd order for every $n\ge 0$. Theorem~\ref{thm:C-local} therefore gives
\[
c(p^2n+r)=0\qquad(n\ge 0),
\]
as claimed.
\end{proof}
\section{Proof of Theorem~\ref{thm:Te-partitions}}\label{sec:proof-Te-part}
\begin{proof}[Proof of Theorem~\ref{thm:Te-partitions}]
Since
\[
P(q)=\sum_{n\ge 0} q^{n(3n+1)/2}(1-q^{2n+1})
\]
and, by~\eqref{pentagonal},
\[
(q;q)_\infty =\sum_{n\ge 0}(-1)^n q^{n(3n+1)/2}(1-q^{2n+1}),
\]
we derive
\[
P(q)-\qpoch{q}{q}
=2\sum_{n\ge 0} q^{(2n+1)(3n+2)}(1-q^{4n+3}).
\]
Combining this identity with Proposition~\ref{prop:Te-closed}, we obtain
\begin{equation}\label{Te-minus-one}
\frac{T_e(q)-1}{2}
=\frac{P(q)-\qpoch{q}{q}}{2\qpoch{q}{q}}
=\frac{1}{\qpoch{q}{q}}
\sum_{n\ge 0} q^{(2n+1)(3n+2)}(1-q^{4n+3}).
\end{equation}
For the ordinary partition interpretation, the summand indexed by $n$ in~\eqref{Te-minus-one} can be written as
\[
\frac{q^{2+3+4+6+7+8+\cdots +(4n)+(4n+2)}}{\prod_{\substack{m\ge 1\\ m\ne 4n+3}}(1-q^m)}.
\]
It therefore forces the parts
\[
2,3,4,6,7,8,\dots,4n,4n+2
\]
to appear, forbids the part $4n+3$, and leaves all other parts unrestricted. Hence it counts partitions whose first missing part among the positive integers not congruent to $1\pmod{4}$ is exactly $4n+3$. Summing over $n\ge 0$ gives
\[
\sum_{N\ge 1} a_e(N)q^N=\frac{T_e(q)-1}{2}.
\]
Since
\[
T_e(q)=1+\sum_{N\ge 1} t_e(N)q^N,
\]
comparison of coefficients yields
\[
a_e(N)=\frac{t_e(N)}{2}\qquad(N\ge 1).
\]

For the overpartition interpretation, the discussion preceding Definition~\ref{def:A-B} shows that $t_e(N)$ counts overpartitions of $N$ whose largest part is even, only even parts may be overlined, and the odd parts are distinct. Toggling the overline on the first occurrence of the largest part is a free involution on this set, so exactly half of these overpartitions have nonoverlined largest part. Therefore
\[
b_e(N)=\frac{t_e(N)}{2}\qquad(N\ge 1).
\]
Combining the two identities, we obtain
\[
a_e(N)=b_e(N)=\frac{t_e(N)}{2}\qquad(N\ge 1),
\]
which proves Theorem~\ref{thm:Te-partitions}.
\end{proof}
\section{Proof of Theorem~\ref{thm:A344650}}\label{sec:proof-A344650}

We begin with the standard staircase lemma.

\begin{lemma}\label{lem:dk}
For each fixed $k\ge 1$,
\[
\sum_{n\ge 0} p_d(k,n)q^n=\frac{q^{k(k+1)/2}}{\qpochn{q}{q}{k}}.
\]
\end{lemma}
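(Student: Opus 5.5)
The plan is the standard staircase bijection: subtract the staircase $(k,k-1,\dots,1)$ from a partition into $k$ distinct parts and identify what remains. Write a partition into exactly $k$ distinct positive parts as $\lambda_1>\lambda_2>\cdots>\lambda_k\ge 1$, and set
\[
\mu_i:=\lambda_i-(k-i+1)\qquad(1\le i\le k).
\]
Strictness $\lambda_i\ge\lambda_{i+1}+1$ becomes $\mu_i\ge\mu_{i+1}$, and $\lambda_k\ge 1$ becomes $\mu_k\ge 0$. Thus $\mu=(\mu_1\ge\cdots\ge\mu_k\ge 0)$ is a partition into at most $k$ parts, once zeros are discarded. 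The sizes are related by
\[
|\lambda|=|\mu|+\sum_{i=1}^{k}i=|\mu|+\frac{k(k+1)}{2}.
\]
The map is clearly invertible by adding the staircase back. So it is a bijection between partitions of $n$ into exactly $k$ distinct parts and partitions of $n-k(k+1)/2$ into at most $k$ parts.

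It remains to recall that the generating function for partitions into at most $k$ parts is $1/\qpochn{q}{q}{k}$. By conjugation these are the partitions with all parts at most $k$. The factor $1/(1-q^j)$, for $1\le j\le k$, records the multiplicity of the part $j$, so the product of these factors is $1/\qpochn{q}{q}{k}$. Alternatively, this is the $q$-binomial theorem~\eqref{q-binomial} with $a=q^{k}$... but the direct product argument is simpler, and that is the one I would use.

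Multiplying by $q^{k(k+1)/2}$ gives the claimed formula. There is no real obstacle here. The only point needing care is the index bookkeeping, namely that the subtracted staircase has total $k(k+1)/2$ and that the last part is allowed to become $0$.
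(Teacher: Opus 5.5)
Your proof is correct and is the same staircase-removal argument the paper uses: subtract $(k,k-1,\dots,1)$ to biject partitions of $n$ into exactly $k$ distinct parts with partitions of $n-k(k+1)/2$ into at most $k$ parts, then use the generating function $1/(q;q)_k$. The only difference is that you justify $1/(q;q)_k$ via conjugation and a product over part sizes $1,\dots,k$, where the paper cites Andrews; the unfinished $q$-binomial aside is unnecessary and can be dropped.
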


\begin{proof}
Write a partition into exactly $k$ distinct positive parts as
\[
\lambda_1>\lambda_2>\cdots>\lambda_k\ge 1.
\]
Remove the staircase $(k,k-1,\dots,1)$ by setting
\[
\mu_i=\lambda_i-(k-i+1)\qquad(1\le i\le k).
\]
Then
\[
\mu_1\ge \mu_2\ge \cdots\ge \mu_k\ge 0,
\]
so $\mu$ is an ordinary partition with at most $k$ parts. Conversely, adding the staircase back to any partition with at most $k$ parts gives a partition into exactly $k$ distinct parts. The staircase contributes $k(k+1)/2$, and the generating function for partitions with at most $k$ parts is $1/(q;q)_k$; see, for example, Andrews~\cite[Chapter~1]{Andrews}.
\end{proof}

\begin{proof}[Proof of Theorem~\ref{thm:A344650}]
By Definition~\ref{def:Podd} and Lemma~\ref{lem:dk},
\[
P_{\mathrm{odd}}(q)
=\sum_{n\ge 0} p_{\mathrm{odd}}(n)q^n
=\sum_{r\ge 0}\sum_{n\ge 0} p_d(2r+1,n)q^n
=\sum_{r\ge 0}\frac{q^{(2r+1)(2r+2)/2}}{\qpochn{q}{q}{2r+1}},
\]
which confirms~\eqref{Podd-1}.

Next, Euler's expansion~\eqref{Euler} gives
\begin{align*}
\qpoch{zq}{q}
&=\sum_{n\ge 0}\frac{(-1)^n q^{n(n+1)/2}z^n}{\qpochn{q}{q}{n}} \\
\qpoch{-zq}{q}
&=\sum_{n\ge 0}\frac{q^{n(n+1)/2}z^n}{\qpochn{q}{q}{n}}.
\end{align*}
Taking the odd part in $z$ gives
\[
\frac{\qpoch{-zq}{q}-\qpoch{zq}{q}}{2}
=\sum_{r\ge 0}\frac{q^{(2r+1)(2r+2)/2}z^{2r+1}}{\qpochn{q}{q}{2r+1}}
\]
and setting $z=1$, we obtain
\[
P_{\mathrm{odd}}(q)=\frac12\bigl(\qpoch{-q}{q}-\qpoch{q}{q}\bigr),
\]
which proves~\eqref{Podd-2}.

We now derive the formula for $A(q)$. By~\eqref{a-podd} and~\eqref{Podd-2}, we get
\begin{align}
A(q^2)
&=\frac{P_{\mathrm{odd}}(q)+P_{\mathrm{odd}}(-q)}{2} \nonumber\\
&=\frac14\Bigl(\qpoch{-q}{q}-\qpoch{q}{q}+\qpoch{q}{-q}-\qpoch{-q}{-q}\Bigr) \nonumber\\
&=\frac14\Bigl(\qpoch{-q}{q^2}\qpoch{-q^2}{q^2}
-\qpoch{q}{q^2}\qpoch{q^2}{q^2} \nonumber \\
&\qquad\quad
+\qpoch{q}{q^2}\qpoch{-q^2}{q^2}
-\qpoch{-q}{q^2}\qpoch{q^2}{q^2}\Bigr) \nonumber \\
&=\frac14\Bigl(\qpoch{-q}{q^2}+\qpoch{q}{q^2}\Bigr) \\
&\qquad\times \Bigl(\qpoch{-q^2}{q^2}-\qpoch{q^2}{q^2}\Bigr). \label{A-help1}
\end{align}

Now apply Jacobi's triple product~\eqref{Jacobi-triple} with $q\to q^2$ and with $z=q^{-1}$ and $z=-q^{-1}$, respectively. This yields
\[
\sum_{n\in\mathbb Z} q^{2n^2-n}
=(q^4;q^4)_\infty \qpoch{-q}{q^2},
\]
and
\[
\sum_{n\in\mathbb Z} (-1)^n q^{2n^2-n}
=(q^4;q^4)_\infty \qpoch{q}{q^2}.
\]
Therefore
\begin{align}
\frac12\Bigl(\qpoch{-q}{q^2}+\qpoch{q}{q^2}\Bigr)
&=\frac{1}{\qpoch{q^4}{q^4}}
\sum_{n\in\mathbb Z}\frac{1+(-1)^n}{2}\,q^{2n^2-n} \nonumber \\
&=\frac{1}{\qpoch{q^4}{q^4}}
\sum_{m\in\mathbb Z} q^{8m^2-2m}.\label{A-help2}
\end{align}

In addition, applying the Jacobi triple product~\eqref{Jacobi-triple} with $q\to q^2$ and $z=-1$, and then using Euler's identity~\eqref{Euler0}, we obtain
\[
\sum_{n\in\mathbb Z}(-1)^n q^{2n^2}
=(q^2,q^2,q^4;q^4)_\infty
=(q^2;q^2)_\infty (q^2;q^4)_\infty
=1+2\sum_{n\ge 1}(-1)^n q^{2n^2}.
\]
Hence
\begin{align}
\frac12\Bigl(\qpoch{-q^2}{q^2}-\qpoch{q^2}{q^2}\Bigr)
&=\frac{1}{2}(-q^2;q^2)_\infty \Big( 1-\frac{(q^2;q^2)_\infty}{(-q^2;q^2)_\infty} \Big) \nonumber \\
&=\qpoch{-q^2}{q^2}\sum_{m\ge 1}(-1)^{m-1}q^{2m^2}.\label{A-help3}
\end{align}

Substituting~\eqref{A-help2} and~\eqref{A-help3} in~\eqref{A-help1} yields
\begin{align*}
A(q^2)
&=\frac{\qpoch{-q^2}{q^2}}{\qpoch{q^4}{q^4}}
\left(\sum_{m\in\mathbb Z} q^{8m^2-2m}\right)
\left(\sum_{m\ge 1}(-1)^{m-1}q^{2m^2}\right) \\
&=\frac{1}{\qpoch{q^2}{q^2}}
\left(\sum_{m\in\mathbb Z} q^{8m^2-2m}\right)
\left(\sum_{m\ge 1}(-1)^{m-1}q^{2m^2}\right).
\end{align*}
Finally, replacing $q^2$ by $q$ gives
\[
A(q)
=\frac{1}{\qpoch{q}{q}}
\left(\sum_{m\in\mathbb Z} q^{4m^2-m}\right)
\left(\sum_{m\ge 1}(-1)^{m-1}q^{m^2}\right).
\]
This completes the proof of~\eqref{A344650-A}.
\end{proof}
\section{Concluding remarks and open problems}\label{sec:remarks}
We conclude with two combinatorial problems and two conjectures.

\begin{problem}
Give a direct combinatorial proof of the identity $a_e(N)=b_e(N)$ in Theorem~\ref{thm:Te-partitions}; equivalently, construct a bijection between the ordinary partitions counted by $a_e(N)$ and the overpartitions counted by $b_e(N)$.
\end{problem}

\begin{problem}
Find a combinatorial proof of Theorem~\ref{thm:mod4}.
\end{problem}

\begin{remark}
The coefficients of $C(q)$ are much sparser than those of $T_o(q)$, but they do not remain in $\{-2,-1,0,1,2\}$. For example,
\[
c(9884)=3,\qquad c(11956)=-3,\qquad c(94836)=4,\qquad c(310324)=-5.
\]
\end{remark}

\begin{conjecture}
\begin{align}
\limsup_{n\to\infty}c(n)&=+\infty, \label{limsup} \\
\liminf_{n\to\infty}c(n)& =-\infty. \label{liminf}
\end{align}
\end{conjecture}

Write
\[
T_o(q)=\sum_{n\ge 0} t_o(n)q^n.
\]
By Proposition~\ref{prop:companions},
\[
4t_o(n)=s_1(n+1)-\varepsilon(n+1),
\]
where
\[
\varepsilon(m)=
\begin{cases}
(-1)^r,& m=\trinum{r}\text{ for some }r\ge 0,\\
0,& \text{otherwise.}
\end{cases}
\]
Thus, whenever $n+1$ is not triangular,
\[
t_o(n)=\frac{s_1(n+1)}{4}.
\]
In particular, any $2$-adic divisibility statement for $s_1(m)$ on nontriangular indices transfers directly to $t_o(m-1)$.

\begin{conjecture}\label{conj:T-parity}
If $8n+9$ has a prime divisor $p\equiv 5,7\pmod{8}$ to odd exponent, equivalently if $8n+9$ is not represented by $x^2+2y^2$, then
\[
t_o(n)\equiv 0\pmod{2}.
\]
\end{conjecture}

\noindent{\bf Data Availability Statement.\ }
Not applicable.

\bigskip

\noindent{\bf Declarations.\ }
The authors declare that they have no conflict of interest.

\section*{Acknowledgment}
We thank the referee for valuable comments and suggestions that improved the presentation of this paper.

\end{document}